\numberwithin{equation}{section} 
\newtheorem{lemma}{Lemma}[section]
\newtheorem{corollary}[lemma]{Corollary}
\newtheorem{proposition}[lemma]{Proposition}
\newtheorem{theorem}[lemma]{Theorem}
\theoremstyle{definition}
\newtheorem{definition}[lemma]{Definition}
\newtheorem{example}[lemma]{Example}
\newtheorem{remark}[lemma]{Remark}
\newlist{thm_enum}{enumerate}{1}
\setlist[thm_enum]{label=\normalfont(\alph*)}
\newlist{def_enum}{enumerate}{1}
\setlist[def_enum]{label=\normalfont(\roman*)}
\newlist{equiv_enum}{enumerate}{1}
\setlist[equiv_enum]{label=\normalfont(\roman*)}
\newcommand{\IN}{\mathbb{N}}
\newcommand{\IR}{\mathbb{R}}
\newcommand{\IC}{\mathbb{C}}
\newcommand{\abs}[1]{\left\lvert#1\right\rvert}
\newcommand{\normalabs}[1]{\lvert#1\rvert}
\newcommand{\norm}[1]{\left\lVert#1\right\rVert}
\newcommand{\normalnorm}[1]{\lVert#1\rVert}
\newcommand{\biggnorm}[1]{\biggl\lVert#1\biggr\rVert}
\newcommand{\R}[2][\empty]{
	\ifthenelse{\equal{#1}{\empty}}
		{\mathcal{R}\left\{#2\right\}}
		{\mathcal{R}_{#1}\left\{#2\right\}}
}
\newcommand{\LeftEqNo}{\let\veqno\@@leqno}
\renewcommand{\d}{\mathop{}\!d}
\renewcommand{\Re}{\operatorname{Re}}
\renewcommand{\Im}{\operatorname{Im}}
\renewcommand{\epsilon}{\varepsilon}
\renewcommand{\phi}{\varphi}
\DeclareMathOperator{\sign}{sign}
\DeclareMathOperator{\Rad}{Rad}
\begin{document}

\title[Counterexamples to Maximal Regularity]{Maximal Regularity: Positive Counterexamples on UMD-Banach Lattices and Exact Intervals for the Negative Solution of the Extrapolation Problem}

\begin{abstract}
	Using methods from Banach space theory, we prove two new structural results on maximal regularity. The first says that there exist positive analytic semigroups on UMD-Banach lattices, namely $\ell_p(\ell_q)$ for $p \neq q \in (1, \infty)$, without maximal regularity. In the second result we show that the extrapolation problem for maximal regularity behaves in the worst possible way: for every interval $I \subset (1, \infty)$ with $2 \in I$ there exists a family of consistent bounded analytic semigroups $(T_p(z))_{z \in \Sigma_{\pi/2}}$ on $L_p(\IR)$ such that $(T_p(z))$ has maximal regularity if and only if $p \in I$.
\end{abstract}

\author{Stephan Fackler}
\address{Institute of Applied Analysis, University of Ulm, Helmholtzstr. 18, 89069 Ulm}
\email{stephan.fackler@uni-ulm.de}
\thanks{The author was supported by a scholarship of the ``Landesgraduiertenförderung Baden-Württemberg''.}
\keywords{maximal regularity, $H^{\infty}$-calculus, Schauder bases, counterexamples}
\subjclass[2010]{Primary 47D06; Secondary 35B65, 46B15.}

\maketitle

\section{Introduction}

		Let $-A$ be the generator of a $C_0$-semigroup $(T(t))_{t \ge 0}$ on a Banach space $X$. One says that $-A$ has \emph{maximal regularity} (for one or equivalently all choices of $T>0$ and of $p \in (1, \infty)$) if for all $f \in L_p([0,T];X)$ the mild solution $u(t) = \int_0^t T(t-s) f(s) \d s$ of the inhomogeneous abstract Cauchy problem
			\[ 
				\left\{ \begin{aligned} 
					\dot{u}(t) + A(u(t)) & = f(t) \\
					u(0) & = 0
				\end{aligned} \right.
			\]
		satisfies $u \in W_{p}^{1}([0,T];X) \cap L_p([0,T]; D(A))$. Maximal regularity is a fundamental tool in the study of non-linear partial differential equations (see \cite{KunWei04}, \cite{DHP03}, \cite{Pru02} and the references therein).
		
		Although maximal regularity has been very successful for concrete applications, fundamental questions in the structural understanding of this concept are still open (for an explicit statement see for example~\cite[Section~7]{Kal01}). For example, until recently, no explicit example of a generator of an analytic semigroup without maximal regularity on $L_p$ for $p \in (1, \infty) \setminus \{2\}$ has been known, although the existence was shown in \cite{KalLan00}.  In fact, the first explicit examples were given by the author in \cite{Fac13} and \cite{Fac14}. The aim of this note is to develop the techniques from \cite{Fac14} further in order to give new contributions to the study of the structure of maximal regularity with the help of the theory of Schauder bases. For a survey on the open questions in this area we refer to \cite{Fac14b}.
		
		In the first main result (Theorem~\ref{thm:positive_counterexamples}) we show that there exist generators of positive analytic semigroups on UMD-Banach lattices, namely on $\ell_p(\ell_q)$ for $p \neq q \in (1, \infty)$, without maximal regularity. Hence, positivity does not imply maximal regularity. This is in contrast to the following positive result due to L.~Weis \cite[Remark~4.9c)]{Wei01}: the generator of a bounded analytic semigroup on some $L_p$-space for $p \in (1, \infty)$ that is positive and contractive on the real line has maximal regularity.
		
		In the second main result (Theorem~\ref{thm:extrapolation_mr_complete_counterexample}) we study the extrapolation problem for maximal regularity. In \cite{Fac14} it was shown that maximal regularity does in general not extrapolate from $L_2$ to the $L_p$-scale, i.e.\ there exists a family of consistent semigroups $(T_p(t))_{t \ge 0}$ on $L_p$ for $p \in (1, \infty)$ such that $(T_2(t))_{t \ge 0}$ has maximal regularity, but $(T_p(t))_{t \ge 0}$ fails maximal regularity for some, indeed all, $p \in (1, \infty) \setminus \{2\}$. Here we extend this negative result. Indeed, we show that the extrapolation problem behaves in the worst possible way. We now shortly explain what this means.
		
	Suppose that one has given a family $(T_p(z))$ of consistent analytic $C_0$-semigroups on $L_p$ for $p \in (1, \infty)$ and let $M \subset (1,\infty)$ be the set all $p \in (1, \infty)$ for which the semigroup $(T_p(z))$ has maximal regularity. Since an analytic $C_0$-semigroup on a Hilbert space has maximal regularity by a result of de Simon \cite[Lemma 3,1]{Sim64}, one has $2 \in M$. Moreover, it follows from complex interpolation that $M$ is a subinterval of $(1, \infty)$. We show that apart from these obvious structural restrictions one cannot obtain any further positive results for the extrapolation problem: for every interval $I \subset (1, \infty)$ with $2 \in I$ there exists a family of consistent $C_0$-semigroups $(T_p(z))_{z \in \Sigma_{\frac{\pi}{2}}}$ on $L_p(\IR)$ such that $(T_p(z))_{z \in \Sigma_{\frac{\pi}{2}}}$ has maximal regularity if and only if $p \in I$.
	
	In contrast, positive results for the extrapolation problem are known under additional assumptions on the semigroups (see~\cite[Section~6]{Fac13+} and the references therein).

\section{\texorpdfstring{$\mathcal{R}$}{R}-Sectorial Operators and Associated Operators}

	In this section we present the necessary background on sectorial operators and maximal regularity.

	\begin{definition}[Sectorial Operator]\index{sectorial operator} A densely defined operator $A$ on a Banach space $X$ is called \emph{sectorial} if there exists an $\omega \in (0, \pi)$ such that
		\begin{equation*}
			\label{sectorial}
			\tag{$S_{\omega}$}
			\sigma(A) \subset \overline{\Sigma_{\omega}} \qquad \text{and} \qquad \sup_{\lambda \not\in \overline{\Sigma_{\omega + \epsilon}}} \norm{\lambda R(\lambda, A)} < \infty \quad \forall \epsilon > 0.
		\end{equation*}
		One defines the \emph{sectorial angle of $A$}\index{sectorial operator!angle}\index{$\omega(A)$|see{sectorial angle}}\index{angle!sectorial} as $\omega(A) \coloneqq \inf \{ \omega: \text{\eqref{sectorial} holds} \}$.		
	\end{definition}
	
	Recall that on Banach spaces sectorial operators are exactly the (negative) generators of strongly continuous analytic semigroups. Maximal regularity can be characterized by a stronger boundedness property than the boundedness in operator norm. Let $r_k(t) \coloneqq \sign \sin (2^k \pi t)$ be the $k$-th \emph{Rademacher function}.
	
	\begin{definition}[$\mathcal{R}$-Boundedness]
		A family of operators $\mathcal{T} \subset \mathcal{B}(X)$ on a Banach space $X$ is called \emph{$\mathcal{R}$-bounded}\index{$\mathcal{R}$-bounded} if there exists a finite constant $C \ge 0$ such that for each finite subset $\{T_1, \ldots, T_n \}$ of $\mathcal{T}$ and arbitrary $x_1, \ldots, x_n \in X$ one has
		\begin{equation} 
			\biggnorm{\sum_{k = 1}^n r_k T_k x_k}_{L_2([0,1]; X)} \le C \biggnorm{\sum_{k=1}^n r_k x_k}_{L_2([0,1]; X)}. \label{eq:R-ineq}
		\end{equation}
		The best constant $C$ such that \eqref{eq:R-ineq} holds is denoted by $\mathcal{R}(\mathcal{T})$.
	\end{definition} 

	Further let $\Rad(X)$ denote the closed linear span in $L_2([0,1];X)$ of functions of the form $\sum_{k=1}^n r_k x_k$ for $n \in \IN$ and $x_1, \ldots, x_n \in X$. We need the following strengthening of the sectoriality condition.
		
	\begin{definition}[$\mathcal{R}$-Sectorial Operator]			
		A sectorial operator $A$ on a Banach space $X$ is called \emph{$\mathcal{R}$-sectorial} if for some $\omega > \omega(A)$ one has
		\begin{equation*}
			\mathcal{R} \{ \lambda R(\lambda,A): \lambda \not\in \overline{\Sigma_{\omega}} \} < \infty. \label{R-sectorial}\tag{$\mathcal{R}_{\omega}$}
		\end{equation*}
		One defines the \emph{$\mathcal{R}$-sectorial angle} as $\omega_R(A) \coloneqq \inf\{ \omega: \text{\eqref{R-sectorial} holds} \}$\index{angle!$\mathcal{R}$-sectorial}\index{$\mathcal{R}$-sectorial operator!angle}\index{$\omega_R(A)$|see{$\mathcal{R}$-sectorial angle}}.
		\end{definition}
	
	One has the following connection between maximal regularity and $\mathcal{R}$-sectorial operators by L.~Weis \cite[Theorem~4.2]{Wei01} (the UMD assumption is actually only needed in one implication). For the definition and further information on UMD-spaces we refer to \cite{Fra86} and \cite{Bur01}. In the following we only need that mixed $L_p(L_q)$-spaces for $p, q \in (1, \infty)$ are UMD. 
	
	\begin{theorem}\label{thm:weis} Let $-A$ be the generator of an analytic $C_0$-semigroup $(T(z))_{z \in \Sigma}$ on a Banach space $X$. Then the following hold:
		\begin{thm_enum}
			\item If $-A$ has maximal regularity, then $A$ is $\mathcal{R}$-sectorial with $\omega_R(A) < \frac{\pi}{2}$.
			\item Conversely, if $X$ is UMD, $-A$ has maximal regularity if $A$ is an $\mathcal{R}$-sectorial operator with $\omega_R(A) < \frac{\pi}{2}$.
		\end{thm_enum}
	\end{theorem}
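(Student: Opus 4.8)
The plan is to reduce maximal regularity to a translation-invariant (Fourier multiplier) problem on the whole line and then to read off $\mathcal{R}$-sectoriality from the symbol. First I would replace the interval $[0,T]$ by $\IR_+$ (these are equivalent, after shifting $A$ to $A+\delta$ so that $-A$ becomes exponentially stable and $A$ invertible, which affects neither the validity of maximal regularity nor the condition $\omega_R(A)<\frac{\pi}{2}$) and observe that, writing $u(t)=\int_0^t T(t-s)f(s)\d s$ and taking Fourier transforms in $t$, the operator $f \mapsto Au$ becomes the Fourier multiplier operator $T_m$ on $L_p(\IR;X)$ with operator-valued symbol
\[
	m(\xi) = A(i\xi+A)^{-1} = \Id - i\xi(i\xi+A)^{-1} = \Id - i\xi R(i\xi,-A), \qquad \xi \in \IR\setminus\{0\}.
\]
By a standard causal-convolution argument (going back to de~Simon in the Hilbert space case) $-A$ has maximal regularity if and only if $m$ defines a bounded Fourier multiplier on $L_p(\IR;X)$. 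Note that the resolvents $i\xi R(i\xi,-A)$ appearing here are, up to the reflection $\lambda\mapsto-\lambda$, exactly the members of the family $\{\lambda R(\lambda,A):\lambda\notin\overline{\Sigma_\omega}\}$ lying on the imaginary axis, so $m$ is built entirely from the objects controlling $\mathcal{R}$-sectoriality.

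For the implication (b) I would invoke the operator-valued Mikhlin multiplier theorem, which is where the UMD hypothesis enters: on a UMD space a symbol for which $\{m(\xi):\xi\ne0\}$ and $\{\xi m'(\xi):\xi\ne0\}$ are $\mathcal{R}$-bounded is automatically an $L_p(X)$-Fourier multiplier for $p\in(1,\infty)$. If $A$ is $\mathcal{R}$-sectorial with $\omega_R(A)<\frac{\pi}{2}$, then for some $\omega<\frac{\pi}{2}$ the imaginary axis lies in the complement of $\overline{\Sigma_\omega}$, so $\{i\xi R(i\xi,-A):\xi\ne0\}$ is $\mathcal{R}$-bounded; since $\mathcal{R}$-bounds are subadditive and submultiplicative, the same holds for $m(\xi)=\Id-i\xi R(i\xi,-A)$ and, after differentiating (which only produces products of resolvents of the same type), for $\{\xi m'(\xi)\}$. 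The multiplier theorem then gives boundedness of $T_m$, i.e.\ maximal regularity.

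The implication (a) is the hard direction, and I expect it to be the main obstacle; crucially it uses no UMD assumption. Here I only know that $m$ is a bounded multiplier and must extract genuine $\mathcal{R}$-boundedness, which is strictly stronger than the uniform boundedness of $\{m(\xi)\}$ already contained in sectoriality. The key device is the principle that the values of a bounded $L_p(X)$-multiplier at its points of continuity form an $\mathcal{R}$-bounded set with $\mathcal{R}$-bound at most $C_p\normalnorm{T_m}$: testing $T_m$ against functions of the form $\sum_k r_k(s)\,e^{i\xi_k t}\phi(t)\,x_k$, where $\phi$ has Fourier transform concentrated near the origin and $(r_k)$ are Rademacher functions, reproduces $\sum_k r_k m(\xi_k)x_k$ up to a small error, and comparing $L_p(\IR;X)$-norms via Fubini and Kahane's inequality turns the operator norm bound into the defining inequality of $\mathcal{R}$-boundedness. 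Applied to $m$ this yields $\mathcal{R}$-boundedness of $\{i\xi R(i\xi,-A):\xi\ne0\}$ along the imaginary axis.

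Finally I would propagate this from the axis to a full sector. Using the Neumann series $R(\lambda,-A)=\sum_{k\ge0}(i\xi-\lambda)^k R(i\xi,-A)^{k+1}$, which converges for $\abs{i\xi-\lambda}<1/\normalnorm{R(i\xi,-A)}$ and hence on a cone of fixed opening about the imaginary axis since $\normalnorm{R(i\xi,-A)}\le C/\abs{\xi}$, together with the stability of $\mathcal{R}$-bounds under such absolutely convergent series, one obtains $\mathcal{R}$-boundedness of $\{\lambda R(\lambda,-A)\}$ on the complement of some $\overline{\Sigma_\omega}$ with $\omega<\frac{\pi}{2}$. Translated back to $A$ via the reflection above, this is precisely $\omega_R(A)<\frac{\pi}{2}$, completing (a).
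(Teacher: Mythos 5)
The paper does not prove this theorem at all: it quotes it from \cite[Theorem~4.2]{Wei01} (with the necessity direction going back to Cl\'ement--Pr\"uss), so the only meaningful comparison is with that standard literature proof --- and your proposal reconstructs exactly that proof. The reduction to the operator-valued symbol $m(\xi)=\Id-i\xi R(i\xi,-A)$, the Mikhlin--Weis multiplier theorem on UMD spaces for part (b) (your verification that $\{m(\xi)\}$ and $\{\xi m'(\xi)\}$ inherit $\mathcal{R}$-bounds from $\mathcal{R}$-sectoriality is correct), and, for part (a), the principle that a bounded $L_p(\IR;X)$-multiplier has $\mathcal{R}$-bounded symbol values at points of continuity, followed by the Neumann-series propagation from $i\IR$ to a sector, are all sound and are exactly how the cited theorem is proved.

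There is, however, one genuine gap: the opening claim that replacing $A$ by $A+\delta$ ``affects neither the validity of maximal regularity nor the condition $\omega_R(A)<\frac{\pi}{2}$''. The direction needed in part (a) --- from $\omega_R(A+\delta)<\frac{\pi}{2}$ back to $\omega_R(A)<\frac{\pi}{2}$ --- is false in general: writing $\lambda R(\lambda,A)=\frac{\lambda}{\lambda+\delta}\,(\lambda+\delta)R(\lambda+\delta,A+\delta)$ does not help, because for $\lambda$ outside a sector but close to $0$ the point $\lambda+\delta$ lies deep inside the sector, so the $\mathcal{R}$-bounds for $A+\delta$ say nothing about $\lambda R(\lambda,A)$ near the origin; this behaviour is a statement about large times, invisible both to the shifted operator and to maximal regularity on finite intervals. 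The gap is not a removable technicality, since with the paper's finite-interval definition of maximal regularity the implication (a) is actually false without further hypotheses: take $A$ to be the invertible, sectorial, non-$\mathcal{R}$-sectorial operator of Theorem~\ref{thm:positive_counterexamples} and put $B:=A^{-1}$. Then $B$ is a bounded sectorial operator with $\omega(B)=0$, so $-B$ trivially has maximal regularity on every $[0,T]$ (because $B$ is bounded, $Bu$ and $\dot u=-Bu+f$ lie in $L_p$ automatically); yet $\lambda R(\lambda,B)=\Id-\lambda^{-1}R(\lambda^{-1},A)$, and since $\{\mu R(\mu,A):\mu\in\IR_{-}\}$ is not $\mathcal{R}$-bounded, neither is $\{\lambda R(\lambda,B):\lambda\in\IR_{-}\}$, i.e.\ $B$ is not $\mathcal{R}$-sectorial. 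So your proof of (a) must break somewhere, and it breaks precisely at the shift-back step. The repair is to prove the theorem under the hypotheses of Weis's original formulation, which the paper's statement tacitly assumes: $A$ sectorial with $\omega(A)<\frac{\pi}{2}$ (equivalently, the semigroup bounded analytic) and maximal regularity taken on $\IR_{+}$. Under those hypotheses no shift is needed at all --- the bound $\normalnorm{R(i\xi,-A)}\le C/\abs{\xi}$ holds as you use it, the Cl\'ement--Pr\"uss step applies directly to $m$, and the Neumann series closes the argument --- while part (b), where the shift only enters in the harmless direction $\omega_R(A)<\frac{\pi}{2}\Rightarrow\omega_R(A+\delta)<\frac{\pi}{2}$, stands as you wrote it.
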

	
	Hence, it suffices to construct sectorial operators which are not $\mathcal{R}$-sectorial to give counterexamples to maximal regularity. We now transfer the $\mathcal{R}$-sectoriality of an operator on $X$ to the boundedness of an associated operator on $\Rad(X)$. This variant of the transference result in \cite[Theorem~3.3]{Fac14} (see also \cite[Proposition~3.16]{Fac14b} and \cite[Theorem~3.6]{AreBu03}) is the central tool for the counterexamples to be given later.

	\begin{proposition}\label{prop:resolvent_r_associated_operator}
		Let $A$ be an $\mathcal{R}$-sectorial operator. Then there exists a constant $C \ge 0$ such that for all $(q_n)_{n \in \IN} \subset \IR_{-}$ the associated operator
			\[ \mathcal{R}\colon \sum_{n=1}^{N} r_n x_n \mapsto \sum_{n=1}^{N} r_n q_n R(q_n,A)x_n \]
		defined on the finite Rademacher sums extends to a bounded operator on $\Rad(X)$ with operator norm at most $C$.
	\end{proposition}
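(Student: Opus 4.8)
The plan is to recognise that, up to the density of finite Rademacher sums in $\Rad(X)$, the asserted norm estimate for $\mathcal{R}$ is exactly the defining inequality~\eqref{eq:R-ineq} of $\mathcal{R}$-boundedness, applied to the resolvents evaluated along the negative real axis. First I would fix an angle $\omega$ with $\omega_R(A) < \omega < \pi$, which is possible because $\mathcal{R}$-sectoriality provides an angle in $(\omega(A),\pi)$ satisfying~\eqref{R-sectorial}, whence $\omega_R(A) < \pi$; then I set $C \coloneqq \R{\lambda R(\lambda,A) : \lambda \not\in \overline{\Sigma_{\omega}}} < \infty$. The one geometric observation needed is that every $q \in \IR_{-}$ has argument $\pi > \omega$ and therefore lies in $\IC \setminus \overline{\Sigma_{\omega}}$; in particular each $R(q,A)$ exists and $\{ q R(q,A) : q \in \IR_{-} \}$ is a subfamily of the $\mathcal{R}$-bounded family defining $C$, hence is itself $\mathcal{R}$-bounded with $\mathcal{R}$-bound at most $C$.

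Given this, for an arbitrary sequence $(q_n)_{n \in \IN} \subset \IR_{-}$ and any $N \in \IN$, $x_1, \dots, x_N \in X$, I would apply~\eqref{eq:R-ineq} with $T_n = q_n R(q_n, A)$ to obtain
\[
	\biggnorm{\sum_{n=1}^{N} r_n q_n R(q_n,A) x_n}_{L_2([0,1];X)} \le C \biggnorm{\sum_{n=1}^{N} r_n x_n}_{L_2([0,1];X)}.
\]
Since the Rademacher functions are orthonormal in $L_2([0,1])$, the coefficients of a finite Rademacher sum are uniquely recovered by $x_n = \int_0^1 r_n(t)\bigl(\sum_m r_m x_m\bigr)(t)\d t$, so the prescription defining $\mathcal{R}$ is unambiguous on these sums; the displayed inequality then states precisely that $\mathcal{R}$ is bounded by $C$ there. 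As finite Rademacher sums are by definition dense in $\Rad(X)$, the operator $\mathcal{R}$ extends uniquely to a bounded operator on $\Rad(X)$ of norm at most $C$, and $C$ is independent of the chosen sequence $(q_n)$, as required.

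The only technical point deserving care is that the entries $q_n$ need not be distinct, so the operators $T_n$ may repeat, whereas~\eqref{eq:R-ineq} is phrased for finite subsets of $\mathcal{T}$. Here I would invoke the standard fact that the $\mathcal{R}$-bound is unaffected by allowing finite families with repetitions — one may group equal operators and use linearity together with Kahane's contraction principle, or simply read~\eqref{eq:R-ineq} for finite sequences, which is the conventional formulation. Beyond this bookkeeping I do not expect any genuine obstacle: once the inclusion $\IR_{-} \subset \IC \setminus \overline{\Sigma_{\omega}}$ is in place, the proposition is a direct transcription of the definition of $\mathcal{R}$-boundedness into the language of operators on $\Rad(X)$.
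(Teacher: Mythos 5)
Your proof is correct and follows essentially the same route as the paper: the paper's (two-line) proof likewise sets $C \coloneqq \R{\lambda R(\lambda,A) : \lambda \in \IR_{-}} < \infty$, which is finite by $\mathcal{R}$-sectoriality since $\IR_{-}$ lies outside every sector $\overline{\Sigma_{\omega}}$ with $\omega < \pi$, and then reads off the claimed inequality from the definition of $\mathcal{R}$-boundedness, extending by density. Your additional remarks on well-definedness of $\mathcal{R}$ on finite Rademacher sums and on repeated operators in the family are legitimate points of care that the paper passes over silently, but they do not change the argument.
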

	\begin{proof}
		Since $A$ is $\mathcal{R}$-sectorial, one has $C \coloneqq \mathcal{R}\{ \lambda R(\lambda, A): \lambda \in \IR_{-} \} < \infty$. Hence, for all finite Rademacher sums we have by the definition of $\mathcal{R}$-boundedness
			\[ \biggnorm{\sum_{n=1}^N r_n q_n R(q_n,A)x_n} \le C \biggnorm{\sum_{n=1}^N r_n x_n}. \qedhere \]
	\end{proof}

\section{Positive Analytic Semigroups without Maximal Regularity}
			
	In this section we construct generators of positive analytic semigroups without maximal regularity. Let $X$ be a Banach space that admits an $1$-unconditional Schauder basis $(e_m)_{m \in \IN}$. Then it is well-known that $(e_m)_{m \in \IN}$ induces on $X$ via
		\[ x = \sum_{m=1}^{\infty} a_m e_m \ge 0 \quad :\Leftrightarrow \quad a_m \ge 0 \quad \text{for all } m \in \IN \]
	the structure of a Banach lattice. Let $\pi\colon \IN \to \IN$ be a permutation of the even numbers. Then by a classical perturbation result $(f_m)_{m \in \IN}$ defined by
		\begin{equation}
			\label{eq:basis_perturbation}
			f_m =
			\begin{cases}
				e_m & m \text{ odd} \\
				e_{m-1} + e_{\pi(m)} & m \text{ even}
			\end{cases}
		\end{equation}
	is a Schauder basis for $X$ \cite[Ch.~I, Proposition~4.4]{Sin70}. Let $A$ be the \emph{Schauder multiplier} associated to some real positive sequence $(\gamma_m)_{m \in \IN}$ with respect to $(f_m)_{m \in \IN}$, that is 
		\begin{align*}
			D(A) = \biggl\{ x = \sum_{m=1}^{\infty} a_m f_m: \sum_{m=1}^{\infty} \gamma_m a_m f_m \text{ exists} \biggr\} \\
			A \left(\sum_{m=1}^{\infty} a_m f_m \right) = \sum_{m=1}^{\infty} \gamma_m a_m f_m.
		\end{align*}
		Then $A$ clearly is a closed densely defined operator. Let $BV$ be the Banach space of all sequences $(x_m)_{m \in \IN}$ with bounded variation, i.e.\ $\norm{(x_m)}_{\mathrm{BV}} \coloneqq \norm{x_1} + \sum_{m=1}^{\infty} \abs{x_{m+1} - x_m} < \infty$. Concerning the boundedness of $A$, one has the following positive result \cite[Lemma~2.4]{Ven93}.
		
		\begin{proposition}\label{prop:sm_bounded}
			Let $(e_m)_{m \in \IN}$ be a Schauder basis for a Banach space $X$. Then there exists a constant $K \ge 0$ such that for all $(\gamma_m)_{m \in \IN} \in BV$ the Schauder multiplier $A$ associated to $(\gamma_m)_{m \in \IN}$ with respect to $(e_m)_{m \in \IN}$ is bounded and satisfies
				\[
					\norm{A} \le K \norm{(\gamma_m)}_{\mathrm{BV}}.
				\]
		\end{proposition}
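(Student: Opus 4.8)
The plan is to reduce the whole estimate to summation by parts, the only input from the basis being the uniform boundedness of its partial sum projections. Write $P_N\colon \sum_m a_m e_m \mapsto \sum_{m=1}^N a_m e_m$; by the uniform boundedness principle the basis constant $K_0 \coloneqq \sup_N \norm{P_N}$ is finite, and this is the only geometric quantity that will enter the final constant $K$.

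Now fix $x = \sum_{m=1}^\infty a_m e_m \in X$ and put $s_m \coloneqq P_m x$ with $s_0 \coloneqq 0$, so that $a_m e_m = s_m - s_{m-1}$. The key step is to rewrite the partial sums of the formal series $Ax$ by summation by parts,
\[ \sum_{m=1}^N \gamma_m a_m e_m = \sum_{m=1}^N \gamma_m (s_m - s_{m-1}) = \gamma_N s_N + \sum_{m=1}^{N-1} (\gamma_m - \gamma_{m+1}) s_m. \]
Since $\norm{s_m} \le K_0 \norm{x}$ for all $m$ and $\abs{\gamma_N} \le \abs{\gamma_1} + \sum_{m=1}^{N-1} \abs{\gamma_{m+1} - \gamma_m} \le \norm{(\gamma_m)}_{\mathrm{BV}}$, the triangle inequality yields the uniform a priori bound
\[ \biggnorm{\sum_{m=1}^N \gamma_m a_m e_m} \le K_0 \norm{x} \Bigl( \abs{\gamma_N} + \sum_{m=1}^{N-1} \abs{\gamma_m - \gamma_{m+1}} \Bigr) \le 2 K_0 \norm{(\gamma_m)}_{\mathrm{BV}} \norm{x}. \]

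It then remains to turn this into actual convergence, i.e.\ to show $x \in D(A)$ for every $x \in X$. For this I would apply the same summation by parts to a tail, getting
\[ \sum_{m=M+1}^N \gamma_m (s_m - s_{m-1}) = \gamma_N s_N - \gamma_{M+1} s_M + \sum_{m=M+1}^{N-1} (\gamma_m - \gamma_{m+1}) s_m. \]
Because $(\gamma_m) \in BV$ is in particular convergent, say $\gamma_m \to \gamma_\infty$, while $s_m \to x$, the boundary terms satisfy $\gamma_N s_N - \gamma_{M+1} s_M \to \gamma_\infty x - \gamma_\infty x = 0$, and the remaining sum is dominated by $K_0 \norm{x} \sum_{m > M} \abs{\gamma_m - \gamma_{m+1}} \to 0$ as $M \to \infty$. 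Hence the partial sums are Cauchy, the series defining $Ax$ converges for every $x$, so $A$ is everywhere defined and the a priori bound gives $\norm{A} \le 2 K_0 \norm{(\gamma_m)}_{\mathrm{BV}}$, which is the claim with $K = 2 K_0$.

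The one genuinely delicate point is this convergence step: the crude estimate leaves the two boundary terms $\gamma_N s_N$ and $\gamma_{M+1} s_M$, neither of which tends to $0$ on its own, and the argument hinges on noticing that they \emph{cancel} in the limit since both $\gamma_N, \gamma_{M+1} \to \gamma_\infty$ and $s_N, s_M \to x$. Everything else is a routine application of summation by parts together with the uniform boundedness of the partial sum projections.
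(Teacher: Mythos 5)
Your proof is correct: the summation-by-parts identity, the a priori bound via the basis constant $K_0$, and the tail argument establishing convergence (including the crucial cancellation of the boundary terms $\gamma_N s_N$ and $\gamma_{M+1}s_M$, which indeed rests on the fact that a $BV$ sequence converges) are all sound, and they yield $D(A)=X$ with $\norm{A}\le 2K_0\norm{(\gamma_m)}_{\mathrm{BV}}$ as claimed. The paper gives no proof of its own but quotes the result as \cite[Lemma~2.4]{Ven93}, and your Abel-summation argument is essentially the standard proof of that cited lemma, so there is nothing further to compare.
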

		 
	We now study the positivity of the formal semigroup $(e^{-tA})_{t \ge 0}$ on $X$ generated by $-A$ as defined above with respect to the just defined lattice structure. Clearly, the semigroup is positive if and only if $e^{-tA} e_m \ge 0$ for all $m \in \IN$ and all $t \ge 0$. For odd $m$ this is satisfied because of $e^{-tA}e_m = e^{-tA} f_m = e^{-\gamma_m t} e_m$. For even $m$ one has
		\begin{equation}\label{eq:application_semigroup}
			\begin{split} 
				e^{-tA} e_m & = e^{-tA} (f_{\pi^{-1}(m)} - e_{\pi^{-1}(m)-1}) = e^{-tA} (f_{\pi^{-1}(m)} - f_{\pi^{-1}(m)-1}) \\
				& = e^{-t\gamma_{\pi^{-1}(m)}} f_{\pi^{-1}(m)} - e^{-t \gamma_{\pi^{-1}(m)-1}} f_{\pi^{-1}(m)-1} \\
				& = e^{-t\gamma_{\pi^{-1}(m)}} (e_{\pi^{-1}(m)-1} + e_m) - e^{-t \gamma_{\pi^{-1}(m)-1}} e_{\pi^{-1}(m)-1} \\
				& = (e^{-t\gamma_{\pi^{-1}(m)}} - e^{-t \gamma_{\pi^{-1}(m)-1}}) e_{\pi^{-1}(m)-1} + e^{-t\gamma_{\pi^{-1}(m)}} e_m.
			\end{split}
		\end{equation}
	Therefore $(e^{-tA})_{t \ge 0}$ is positive if and only if $\gamma_{m} \le \gamma_{m-1}$ for all even $m \in \IN$. 
	
	Later, the following elementary observation will be useful.
	
	\begin{lemma}\label{lem:maximize_resolvent_sequence}
		For $\gamma_m > \gamma_{m-1} > 0$ consider the function $d(t) \coloneqq t [(t+\gamma_{m-1})^{-1} - (t+\gamma_{m})^{-1}]$ on $\IR_{+}$. Then $d$ has a maximum which is bigger than $\frac{1}{2} \frac{\gamma_m - \gamma_{m-1}}{\gamma_m + \gamma_{m-1}}$.
	\end{lemma}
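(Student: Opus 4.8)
The plan is to first collapse $d$ into a single rational function, then locate its maximum explicitly by an elementary optimization, and finally compare the maximal value with the claimed lower bound, where the comparison reduces to the strict AM--GM inequality.

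First I would abbreviate $a \coloneqq \gamma_{m-1}$ and $b \coloneqq \gamma_m$, so that $b > a > 0$, and combine the two fractions:
\[ d(t) = t \cdot \frac{(t+b) - (t+a)}{(t+a)(t+b)} = (b - a) \cdot \frac{t}{(t+a)(t+b)}. \]
Since $b - a$ is a fixed positive constant, maximizing $d$ on $\IR_+$ is the same as maximizing $g(t) \coloneqq \frac{t}{(t+a)(t+b)}$.

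Next I would locate the maximum of $g$. For $t > 0$ I would pass to the reciprocal,
\[ \frac{1}{g(t)} = \frac{(t+a)(t+b)}{t} = t + \frac{ab}{t} + (a+b), \]
and apply $t + \frac{ab}{t} \ge 2\sqrt{ab}$, with equality exactly at $t = \sqrt{ab}$. Thus $1/g$ attains its minimum $(a+b) + 2\sqrt{ab} = (\sqrt a + \sqrt b)^2$ at $t = \sqrt{ab}$, so $g$, and hence $d$, attains its maximum there, with value
\[ d(\sqrt{ab}) = \frac{b - a}{(\sqrt a + \sqrt b)^2}. \]
(One could equivalently differentiate $g$ and solve $g'(t) = 0$; the AM--GM route just avoids the quotient rule.) Finally I would compare this with the asserted bound: since $b - a > 0$, the inequality $\frac{b-a}{(\sqrt a + \sqrt b)^2} > \frac{1}{2} \frac{b-a}{a+b}$ cancels the common factor and becomes $2(a+b) > (\sqrt a + \sqrt b)^2 = a + b + 2\sqrt{ab}$, i.e.\ $a + b > 2\sqrt{ab}$, i.e.\ $(\sqrt b - \sqrt a)^2 > 0$, which holds strictly because $a \neq b$.

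As for the main obstacle: there is essentially none, since the statement is elementary. The only mildly non-obvious moves are recognizing the clean form $1/g(t) = t + ab/t + (a+b)$, which makes the maximizer $t = \sqrt{ab}$ and the maximal value transparent, and noticing that after cancelling the factor $b - a$ the target estimate is nothing but the strict AM--GM inequality for $a$ and $b$.
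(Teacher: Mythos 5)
Your proof is correct. Note, though, that the paper never writes out a proof of this lemma at all---it is stated as an ``elementary observation''---so the comparison can only be with the argument the paper implicitly relies on, which becomes visible in how the lemma is applied. In the proofs of Theorem~\ref{thm:positive_counterexamples} and Proposition~\ref{prop:mr_necessary_condition} the resolvent parameters are chosen as $q_m = -\gamma_{2m}$ (respectively $q_m = -\gamma_{4m+2}$), i.e.\ the function $d$ is simply evaluated at one of the two endpoints $t = \gamma_{m-1}$ or $t = \gamma_m$, where one computes the exact value
\[
	d(\gamma_m) = \frac{\gamma_m}{\gamma_m + \gamma_{m-1}} - \frac{1}{2} = \frac{1}{2}\,\frac{\gamma_m - \gamma_{m-1}}{\gamma_m + \gamma_{m-1}}
\]
(and by the symmetry $t \mapsto \gamma_{m-1}\gamma_m/t$ the same value at $t = \gamma_{m-1}$), so the supremum of $d$ is at least the claimed quantity; it is this exact endpoint value, not the maximum, that produces the coefficients $\frac{1}{2}$, $\frac{1}{6}$ and $c_{4m+2}$ in the later computations. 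Your route goes further: writing $a = \gamma_{m-1}$, $b = \gamma_m$ and $d(t) = (b-a)\,t/((t+a)(t+b))$, you locate the true maximizer $t = \sqrt{ab}$ via AM--GM applied to $1/g$, obtain the closed-form maximum $(b-a)/(\sqrt{a}+\sqrt{b})^2$, and reduce the comparison to strict AM--GM. This extra precision is exactly what justifies the strict inequality ``bigger than'' in the statement: endpoint evaluation alone gives only equality with the bound, and strictness then requires knowing that the maximum is attained elsewhere, namely at $\sqrt{ab} \neq \gamma_m$, which your argument supplies. So your proof is, if anything, the more complete justification of the lemma as stated, while the paper's implicit endpoint computation is the version actually exploited downstream.
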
 
	
	With these preliminary observations we obtain the following result.
	
	\begin{theorem}\label{thm:positive_counterexamples}
		Let $X$ be a Banach space that admits a normalized non-symmetric 1-unconditional Schauder basis $(e_m)_{m \in \IN}$. We consider $X$ as a Banach lattice with the order induced by $(e_m)_{m \in \IN}$. Then there exists a non-$\mathcal{R}$-sectorial operator $A$ with $\omega(A) = 0$ such that $-A$ generates a positive analytic $C_0$-semigroup on $X$.
	\end{theorem}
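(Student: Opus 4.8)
The plan is to realise $A$ as the Schauder multiplier of a carefully chosen decreasing symbol $(\gamma_m)$ with respect to a perturbed basis $(f_m)$, and to defeat $\mathcal{R}$-sectoriality through the associated operator of Proposition~\ref{prop:resolvent_r_associated_operator}. I would first fix the symbol $\gamma_m = 2^{-m}$. This sequence is strictly decreasing, so in particular $\gamma_m \le \gamma_{m-1}$ for every even $m$, and by the computation culminating in~\eqref{eq:application_semigroup} the semigroup $(e^{-tA})_{t \ge 0}$ is positive. Monotonicity also yields $\omega(A) = 0$: the operator $\lambda R(\lambda, A)$ is the Schauder multiplier for $(f_m)$ with symbol $g_m(\lambda) = \lambda/(\lambda - \gamma_m)$, and since $(\gamma_m)$ is monotone the variation $\sum_m \abs{g_{m+1}(\lambda) - g_m(\lambda)}$ is dominated by the total variation of $\gamma \mapsto \lambda/(\lambda - \gamma)$ on $[0, \gamma_1]$, that is, by $\abs{\lambda} \int_0^\infty \abs{\lambda - \gamma}^{-2} \d\gamma \le \pi/\sin\epsilon$ whenever $\lambda \notin \overline{\Sigma_\epsilon}$. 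Proposition~\ref{prop:sm_bounded} then bounds $\norm{\lambda R(\lambda, A)}$ uniformly off every sector, so $\sigma(A) \subset \IR_{\ge 0}$, $A$ is sectorial with $\omega(A) = 0$, and $-A$ generates a positive analytic $C_0$-semigroup.

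It remains to force $A$ to fail $\mathcal{R}$-sectoriality, for which by Proposition~\ref{prop:resolvent_r_associated_operator} it suffices to make the associated operator unbounded on $\Rad(X)$. In the same way as in~\eqref{eq:application_semigroup}, for even $k$ and $q = -t < 0$ one computes, with $n := \pi^{-1}(k)$, that $q R(q, A) e_k = \frac{t}{t+\gamma_n} e_k + c_n(t)\, e_{n-1}$ where $c_n(t) = t\bigl((t+\gamma_n)^{-1} - (t+\gamma_{n-1})^{-1}\bigr) \ge 0$. Since $\gamma_n = \tfrac12 \gamma_{n-1}$, Lemma~\ref{lem:maximize_resolvent_sequence} lets me choose $t = t_n$ with $c_n(t_n) \ge \tfrac12 \cdot \tfrac{\gamma_{n-1} - \gamma_n}{\gamma_{n-1} + \gamma_n} \ge \tfrac16 =: \delta$. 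Feeding a finite family of \emph{even} indices $k_j$ with scalars $a_j$ into the associated operator and taking $q_j = -t_{n_j}$, the outputs $a_j\bigl(\frac{t_{n_j}}{t_{n_j}+\gamma_{n_j}} e_{k_j} + c_{n_j}(t_{n_j}) e_{n_j-1}\bigr)$ have pairwise disjoint supports in $(e_m)$ (the $k_j$ are distinct even, the $n_j - 1$ distinct odd). Since $(e_m)$ is $1$-unconditional the Rademacher norms collapse to lattice norms, and using unconditionality twice to drop the diagonal part gives
\[ \norm{\mathcal{R}\Bigl(\sum_j r_j a_j e_{k_j}\Bigr)}_{\Rad(X)} \ge \delta\, \norm{\sum_j a_j e_{n_j-1}}_X \quad\text{and}\quad \norm{\sum_j r_j a_j e_{k_j}}_{\Rad(X)} = \norm{\sum_j a_j e_{k_j}}_X. \]
Thus any configuration with $\norm{\sum_j a_j e_{n_j-1}}_X$ large relative to $\norm{\sum_j a_j e_{\pi(n_j)}}_X$ forces a large lower bound for the norm of the associated operator.

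Here $\pi(n_j) = k_j$ ranges over even indices and $n_j - 1$ over odd indices, so what I need is a single permutation $\pi$ of the even numbers for which the induced rearrangement from odd to even positions makes $\norm{\sum_j a_j e_{n_j-1}}_X$ arbitrarily large compared with $\norm{\sum_j a_j e_{\pi(n_j)}}_X$. This is exactly where the non-symmetry hypothesis enters, and it is the main obstacle. The combinatorial input I would establish is: for a normalized $1$-unconditional \emph{non-symmetric} basis, for all $M$ and all $R$ there exist disjoint finite $A, B \subset \{R+1, R+2, \dots\}$, a bijection $\beta \colon A \to B$ and scalars $(a_i)_{i \in A}$ with $\norm{\sum_{i \in A} a_i e_i}_X \ge M \norm{\sum_{i \in A} a_i e_{\beta(i)}}_X$. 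Non-symmetry provides rearrangements of unbounded ratio; inserting a fresh, far-out matched placement on untouched indices upgrades this to \emph{disjoint} source/target pairs, and since a finite initial segment can distort a $1$-unconditional basis only boundedly, the same unboundedness persists on every tail $\{> R\}$.

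Granting this lemma, I would extract inductively a sequence of mutually disjoint blocks $(A_\nu, B_\nu)$ on successively higher indices with ratio at least $\nu$. Reindexing $(e_m)$ by a bijection that spreads these blocks out and places $\bigcup_\nu A_\nu$ on odd positions and $\bigcup_\nu B_\nu$ on even positions is permissible, since symmetry is preserved under reorderings; defining $\pi$ on the even numbers so that for each source $e_s$ (odd position) the successor even position $s+1$ is sent to the matched target $e_{\beta_\nu(s)}$ realises block $\nu$ as an admissible family of active pairs, at which the uniform gap $\gamma_{s+1} = \tfrac12\gamma_s$ guarantees $c(\cdot) \ge \delta$. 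Testing the associated operator on the $\nu$-th block then yields operator norm at least $\delta\nu$, so as $\nu \to \infty$ the associated operator is unbounded on $\Rad(X)$; by Proposition~\ref{prop:resolvent_r_associated_operator} the operator $A$ is not $\mathcal{R}$-sectorial, while remaining positive with $\omega(A) = 0$. The crux throughout is the combinatorial lemma converting abstract non-symmetry into disjoint tail witnesses placed with source on odd and target on even indices.
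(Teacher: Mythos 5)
Your proposal is correct in substance, and it reaches the theorem by a genuinely different route, although it lives inside the same framework as the paper: the same perturbed basis $f_m = e_{m-1} + e_{\pi(m)}$, the same associated operator from Proposition~\ref{prop:resolvent_r_associated_operator}, and the same ratio mechanism from Lemma~\ref{lem:maximize_resolvent_sequence}. The difference is where non-symmetry enters. The paper does not construct $\pi$ by hand: it quotes \cite[first part of the proof of Proposition~23.2]{Sin70} to obtain a permutation $\pi$ of the even numbers for which $(e_{2m-1})_{m \in \IN}$ and $(e_{\pi(2m)})_{m \in \IN}$ are \emph{not equivalent}, picks a sequence $(a_m)$ whose expansion converges with respect to one of these basic sequences but not the other (the unwanted direction is absorbed by redefining $(f_m)$), and argues qualitatively by contradiction: with the twisted lacunary symbol ($\gamma_m = 2^{m+1}$ for $m$ odd, $2^{m-1}$ for $m$ even) and $q_m = -2^{2m-1}$, a bounded associated operator applied to the convergent series $\sum_m r_m a_m e_{\pi(2m)}$ forces $\sum_m a_m e_{2m-1}$ to converge, by \cite[Theorem~12.3]{DJT95} and unconditionality --- contradicting the choice of $(a_m)$. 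You instead prove the combinatorial content yourself in quantitative form (disjoint tail blocks of unbounded rearrangement ratio), build $\pi$ by relabelling the basis, and exhibit finite Rademacher sums on which the associated operator has norm at least $\delta\nu$. Your route buys self-containedness and an explicit quantitative blow-up, plus a cleaner sectoriality proof (BV estimate of the monotone resolvent symbol, versus the paper's estimate of the semigroup symbols $e^{-t\gamma_m^{\alpha}}$ combined with $\omega(A^{\alpha}) = \alpha\,\omega(A)$); the paper's route is much shorter because Singer's result already packages the combinatorics, and it needs no relabelling since the two non-equivalent sequences sit at the odd and permuted even positions from the start.

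Two steps in your combinatorial lemma need care; both can be completed. First, disjointness: \enquote{inserting a fresh, far-out matched placement} is not by itself a proof, because moving coefficients to fresh positions can itself distort norms unboundedly --- that is exactly what non-symmetry means. The step is saved by a dichotomy: if $(a_i)_{i \in F}$ and $\sigma$ witness ratio $\ge M$ and $C$ is a fresh index set with a bijection $\tau\colon F \to C$, then either $\norm{\sum_{i} a_i e_i} \ge \sqrt{M}\, \norm{\sum_{i} a_i e_{\tau(i)}}$, in which case the disjoint pair $(F, C)$ is a witness, or $\norm{\sum_{i} a_i e_{\tau(i)}} \ge \sqrt{M}\, \norm{\sum_{i} a_i e_{\sigma(i)}}$, in which case the disjoint pair $(C, \sigma(F))$ is one. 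Second, the tail claim: discard the at most $2R$ indices $i \in F$ with $i \le R$ or $\sigma(i) \le R$; their contribution to the source norm is at most $2R \max_i \abs{a_i} \le 2R \norm{\sum_i a_i e_{\sigma(i)}}$ by $1$-unconditionality and normalization, so a witness of ratio $M + 2R$ restricts to a tail witness of ratio $\ge M$. With these completions (and noting that your lemma is symmetric in the roles of the two index sets, so the direction of the inequality costs nothing), your argument goes through.
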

	\begin{proof}
		It follows from \cite[first part of the proof of Proposition~23.2]{Sin70} that there exists a permutation $\pi\colon \IN \to \IN$ of the even numbers such that $(e_{2m - 1})_{m \in \IN}$ and $(e_{\pi(2m)})_{m \in \IN}$ are not equivalent. Hence, there exists a sequence $(a_m)_{m \in \IN}$ such that the expansion for $(a_m)_{m \in \IN}$ converges with respect to $(e_{2m - 1})_{m \in \IN}$ or $(e_{\pi(2m)})_{m \in \IN}$ but not for both. For the rest of the proof we will assume that the expansion converges for $(e_{\pi(2m)})_{m \in \IN}$. In the other case a similar argument can be applied if one replaces $(f_m)_{m \in \IN}$ by
			\[
				f_m = \begin{cases}
							e_m + e_{\pi(m+1)} & m \text{ odd} \\
							e_{\pi(m)} & m \text{ even}.
				\end{cases}
			\]
		We use the following twisted version of the lacunary sequence $(2^m)_{m \in \IN}$:
			\[
				\gamma_m = \begin{cases}
					2^{m+1} & m \text{ odd} \\
					2^{m-1} & m \text{ even}.
				\end{cases}
			\]
		By definition, one has $\gamma_m < \gamma_{m-1}$ for all even $m \in \IN$. By the above observation this implies that the formal semigroup generated by the Schauder multiplier associated to $(-\gamma_m)_{m \in \IN}$ is positive. It therefore remains to show that the multiplier $A$ associated to $(\gamma_m)_{m \in \IN}$ is a sectorial operator with $\omega(A) = 0$ which is not $\mathcal{R}$-sectorial. For this let us consider the sequence $(e^{-t\gamma_m^{\alpha}})_{m \in \IN}$ for $t > 0$ und $\alpha > 0$. For its total variation one obtains
			\begin{align*}
				\MoveEqLeft \sum_{m = 1}^{\infty} e^{-t 2^{(2m-1)\alpha}} - e^{-t 2^{2m\alpha}} + e^{-t 2^{(2m-1)\alpha}} - e^{-t 2^{2(m+1) \alpha}} \\
				& \le t \sum_{m=1}^{\infty} (2^{2m\alpha} - 2^{(2m-1)\alpha}) e^{-t2^{(2m-1)\alpha}} + (2^{(2m+2)\alpha} - 2^{(2m-1)\alpha}) e^{-t 2^{(2m-1)\alpha}} \\
				& = (2^{3\alpha} + 2^{\alpha} - 2)t \sum_{m=1}^{\infty} 2^{(2m-1)\alpha} e^{-t 2^{(2m-1)\alpha}} \\
				& = \frac{2^{3\alpha} + 2^{\alpha} - 2}{2^{\alpha} - 1}t \sum_{m=1}^{\infty} \int_{2^{(2m-1)\alpha}}^{2^{2m\alpha}} e^{-t2^{(2m-1)\alpha}} \, \d s \\
				& \le \frac{2^{3\alpha} + 2^{\alpha} - 2}{2^{\alpha} - 1}t \sum_{m=1}^{\infty} \int_{2^{(2m-1)\alpha}}^{2^{2m\alpha}} e^{-ts/2^{\alpha}} \, \d s \\
				& \le \frac{2^{3\alpha} + 2^{\alpha} - 2}{2^{\alpha} - 1}t \int_{2^{\alpha}}^{\infty} e^{-ts/2^{\alpha}} \, \d s = \frac{2^{\alpha}}{2^{\alpha} - 1} (2^{3\alpha} + 2^{\alpha} - 2) e^{-t}. 				
			\end{align*}
		It follows from $\omega(A^{\alpha}) = \alpha \omega(A)$ \cite[Theorem~15.16]{KunWei04} and Proposition~\ref{prop:sm_bounded} that $A$ is sectorial with $\omega(A) = 0$.
		
		Now assume that $A$ is $\mathcal{R}$-sectorial. Let $(q_m)_{m \in \IN} \subset \IR_{-}$ be a sequence to be chosen later. Then it follows from Proposition~\ref{prop:resolvent_r_associated_operator} that the operator $\mathcal{R}\colon \Rad(X) \to \Rad(X)$ associated to the sequence $(q_m)_{m \in \IN}$ is bounded. We now apply $\mathcal{R}$ to the element $x = \sum_{m=1}^{\infty} a_m e_{\pi(2m)} r_m$ of $\Rad(X)$. Since $R(\lambda, A)$ is the multiplier associated to the sequence $((\lambda - \gamma_m)^{-1})_{m \in \IN}$ and $e_{\pi(2m)} = f_{2m} - f_{2m-1}$, we obtain
			\begin{align*}
				\mathcal{R}(x) & = \mathcal{R} \biggl( \sum_{m=1}^{\infty} a_m (f_{2m} - f_{2m-1}) r_m \biggr) \\
				& = \sum_{m=1}^{\infty} r_m \frac{a_m q_m}{q_m - \gamma_{2m}} f_{2m} - r_m \frac{a_m q_m}{q_m - \gamma_{2m-1}} f_{2m-1} \\
				& = \sum_{m=1}^{\infty}r_m  \frac{a_m q_m}{q_m - \gamma_{2m}} (e_{\pi(2m)} + e_{2m-1}) - r_m \frac{a_m q_m}{q_m - \gamma_{2m-1}} e_{2m-1} \\
				& = \sum_{m=1}^{\infty} r_m \frac{a_m q_m}{q_m - \gamma_{2m}} e_{\pi(2m)} + r_m a_m q_m \biggl( \frac{1}{q_m - \gamma_{2m}} - \frac{1}{q_m - \gamma_{2m-1}} \biggr) e_{2m-1}.
			\end{align*}
		 Now take $q_m = -2^{2m-1}$ as motivated in Lemma~\ref{lem:maximize_resolvent_sequence}. Then we see that
			\[
				\sum_{m=1}^{\infty} \frac{1}{2} r_m a_m e_{\pi(2m)} + \frac{1}{6} r_m a_m e_{2m-1}.
			\]
		exists in $\Rad(X)$. By \cite[Theorem~12.3]{DJT95} and the unconditionality of the basis, $\sum_{m=1}^{\infty} a_m e_{2m-1}$ converges. This contradicts the choice of $(a_m)_{m \in \IN}$ and therefore $A$ is not $\mathcal{R}$-sectorial.
	\end{proof}
	
	We now give some concrete examples of spaces for which the above theorem can be applied.
	
	\begin{example}
		For $p, q \in (1, \infty)$ consider the UMD-spaces $\ell_p(\ell_q)$ with their natural lattice structure. Its ordering is induced by the standard unit vector basis $(e_m)_{m \in \IN}$ of $\ell_p(\ell_q)$ for some enumeration of $\IN \times \IN$. Clearly, $\ell_p(\ell_q)$ contains both copies of $\ell_p$ and $\ell_q$ and therefore for $p \neq q$ the basis $(e_m)_{m \in \IN}$ is 1-unconditional and non-symmetric. Hence for $p \neq q$, Theorem~\ref{thm:positive_counterexamples} yields a sectorial operator $A$ on $\ell_p(\ell_q)$ with $\omega(A) = 0$ such that $-A$ generates a positive analytic $C_0$-semigroup without maximal regularity.
	\end{example}
	
	\begin{example}
		In the next section we see that for $p \in (1,\infty) \setminus \{2\}$ the space $\ell_p$ admits after equivalent renorming a non-symmetric $1$-unconditional basis. If one uses the ordering induced by this basis, one sees with the help of Theorem~\ref{thm:positive_counterexamples} that one can give $\ell_p$ after equivalent renorming a non-standard lattice structure for which there exists a generator $-A$ of a positive analytic $C_0$-semigroup without maximal regularity satisfying $\omega(A) = 0$. Further, these arguments apply to every normalized unconditional basis of $L_p([0,1])$ for $p \in (1, \infty) \setminus \{2\}$ as such bases are automatically non-symmetric~\cite[Ch.~II,~Theorem~21.1]{Sin70}.
	\end{example}

\section{Exact Control of the Extrapolation Scale}
				
		In this section we give the announced complete negative solution of the extrapolation problem for maximal regularity. For $p \in (1, \infty)$ let $(e_m)_{m \in \IN}$ be the standard unit vector basis of $X_p \coloneqq (\oplus_{n=1}^{\infty} \ell_2^n)_{\ell_p}$ seen as a sequence space, the $\ell_p$-sum of finite dimensional Euclidean spaces of increasing dimension. Consider the basis $(f_m)_{m \in \IN}$ given by \eqref{eq:basis_perturbation} with respect to the following permutation already used in \cite{Fac14}. Let $b_0, b_1, b_2, \ldots$ be the first even numbers in the blocks $B_k \coloneqq [ \frac{(k-1)k}{2} + 1, \frac{k(k+1)}{2} ]$ $(k \in \IN)$. Now, define
			\[ \pi(m) = \begin{cases}
					m & m \text{ odd} \\
					b_k & m = 4k + 2 \\
					\min 2\IN \setminus (\{ b_n: n \in \IN \} \cup \pi ([1, m-1])) & m = 4k.
				 \end{cases}
			\]
		We need the following technical result proved in \cite[Proposition~6.6]{Fac14}.
		
		\begin{proposition}\label{prop:constructed_basis_unconditional} The basis $(f_m)_{m \in \IN}$ is unconditional for $p \in (1, 2]$. 
		\end{proposition}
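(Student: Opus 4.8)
The plan is to verify the defining unconditionality inequality directly, reducing it to a single comparison of lattice norms in the $1$-unconditional basis $(e_m)_{m \in \IN}$ of $X_p$ (recall that $(f_m)$ is already known to be a Schauder basis by the perturbation result, so only a uniform estimate on sign changes is needed, and finite combinations suffice by density). First I would expand an arbitrary finite combination $\sum_m a_m f_m$ in $(e_m)$: from $f_m = e_m$ for odd $m$ and $f_m = e_{m-1} + e_{\pi(m)}$ for even $m$, the coefficient of $e_j$ is $a_j + a_{j+1}$ when $j$ is odd and $a_{\pi^{-1}(j)}$ when $j$ is even. Applying a sign sequence $(\epsilon_m)$ leaves the even coordinates unchanged in modulus, while each odd coordinate becomes $\epsilon_j a_j + \epsilon_{j+1} a_{j+1} = \epsilon_j(a_j + a_{j+1}) + (\epsilon_{j+1} - \epsilon_j)a_{j+1}$. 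Since $\abs{\epsilon_{j+1} - \epsilon_j} \le 2$, the triangle inequality together with $1$-unconditionality of $(e_m)$ gives, uniformly in the signs,
\[ \norm{\sum_m \epsilon_m a_m f_m} \le \norm{\sum_m a_m f_m} + 2\norm{\sum_{m \text{ even}} a_m e_{m-1}}. \]
Because the band projection onto the even coordinates is contractive, one also has $\norm{\sum_{m \text{ even}} a_m e_{\pi(m)}} \le \norm{\sum_m a_m f_m}$. Hence unconditionality follows once I establish the single estimate
\begin{equation*}
	\norm{\sum_{m \text{ even}} a_m e_{m-1}} \le C \norm{\sum_{m \text{ even}} a_m e_{\pi(m)}} \tag{$\star$}
\end{equation*}
with $C$ independent of $(a_m)$.

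Both sides of \eqref{} redistribute the \emph{same} coefficients $(a_m)_{m \text{ even}}$ among the blocks $B_k$, each carrying the Euclidean norm of $\ell_2^{\,k}$, the block contributions being combined in $\ell_p$; the left-hand side places $a_m$ at the odd position $m-1$, the right-hand side at the even position $\pi(m)$. The decisive feature for $p \le 2$ is that coarsening the block partition can only decrease the mixed norm: for scalars $u,v$ one has $(\abs{u}^2 + \abs{v}^2)^{1/2} \le (\abs{u}^p + \abs{v}^p)^{1/p}$, so grouping two coordinates into one block is never larger than separating them into two. Thus $(\star)$ will follow if the partition of coefficients induced by $m \mapsto m-1$ is, up to a bounded coarsening, refined by the partition induced by $m \mapsto \pi(m)$, for then the left-hand grouping is the coarser one and hence the smaller.

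The permutation is designed precisely to arrange this refinement, and here $p \le 2$ enters (the comparison must fail for $p > 2$). For the indices $m = 4k+2$ the rule $\pi(4k+2) = b_k$ sends these coefficients to the distinct blocks $B_k$, one per block, whereas the odd positions $4k+1$ used on the left repeatedly fall into a common block once the blocks grow, so on this subsequence the right-hand grouping is strictly finer. The remaining indices $m = 4k$ are sent by $\pi$ to the leftover even positions in increasing order, and these genuinely cluster several coefficients in a single block; this is the main obstacle. I must show that the interleaving of the reserved positions $b_k$ with the densely filled positions $\pi(4k)$ never lets a single $e_{\pi(m)}$-block collect coefficients coming from more than a bounded number of $e_{m-1}$-blocks, and with bounded shift in the block index, so that the coarser/finer comparison holds with a universal constant $C$. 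Concretely this reduces to a block-by-block count comparing the odd positions $\{m-1\}$ with their images $\{\pi(m)\}$ inside each $B_k$, using that $\abs{B_k} = k$ and that the special values $b_k$ occur exactly one per block; carrying out this count carefully is the technical heart of the proof.
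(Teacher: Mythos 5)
Your proposal cannot be checked against a proof in this paper, because the paper does not prove the proposition: it imports it verbatim from \cite[Proposition~6.6]{Fac14}. Judged on its own terms, your reduction is correct and clean: expanding $\sum_m a_m f_m$ in the $1$-unconditional basis $(e_m)$, splitting the sign-changed odd coordinates as $\epsilon_j(a_j+a_{j+1}) + (\epsilon_{j+1}-\epsilon_j)a_{j+1}$, and using the contractive coordinate projection onto the even coordinates does reduce unconditionality of $(f_m)$ to the single domination estimate $(\star)$, with unconditionality constant at most $1+2C$. The mechanism you invoke for $p \le 2$ is also the right one: since $\normalnorm{x}_{\ell_2} \le \normalnorm{x}_{\ell_p}$ for $p \le 2$, merging coordinates into a common $\ell_2$-block can only decrease the $X_p$-norm, and your treatment of the indices $m \equiv 2 \pmod 4$ (on the right-hand side these coefficients sit as singletons, one per block $B_k$, so the left-hand side is a coarsening and hence smaller) is complete as stated.

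The genuine gap is exactly where you place it and then stop: the indices $m \equiv 0 \pmod 4$. This is not a routine verification that can be waved at, because it is the only place where the quantitative definition of $\pi$ and the restriction $p \le 2$ interact -- the proposition is false for $p > 2$, so no proof can avoid this count. What has to be established is: $\pi(4k)$ enumerates the unreserved even integers in increasing order, hence $\pi(4k) = 2k + O(\sqrt{k})$; consequently on the right-hand side $a_{4k}$ lies in a block $B_{j'}$ with $j' \sim 2\sqrt{k}$, and the right-hand groups are intervals of $\sim \sqrt{k}$ consecutive indices $k''$, while on the left-hand side ($4k-1 \in B_j$ with $j \sim 2\sqrt{2k}$) the groups are intervals of $\sim \sqrt{k/2}$ consecutive indices; since both partitions consist of intervals of comparable length at the same scale, every right-hand group meets a bounded number $M$ of left-hand groups, and then $\normalnorm{\cdot}_{\mathrm{left}}^p \le \sum_{R} \#\{L : L \cap R \neq \emptyset\}\, \normalnorm{a|_R}_2^p \le M \normalnorm{\cdot}_{\mathrm{right}}^p$ gives $(\star)$ with $C = M^{1/p}$ (after splitting the two residue classes by the triangle inequality and $1$-unconditionality). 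None of this counting appears in your text. Moreover, two parts of your announced plan would have to be corrected before it can be executed: the requirement of a ``bounded shift in the block index'' is unattainable, since $j/j' \to \sqrt{2}$ means the shift $j - j'$ is unbounded (fortunately it is also irrelevant -- the $X_p$-norm does not see which block carries a group, only the grouping); and the claim that the left partition is refined by the right one fails for $m \equiv 0 \pmod 4$, where the right-hand groups are the \emph{coarser} ones, so that only the bounded mutual overlap, not refinement, rescues the estimate. In short: correct skeleton, correct mechanism, but the decisive combinatorial lemma is missing.
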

				
		We are interested in the case $p > 2$. We make frequent use of the following technical observation. For a sequence $(a_m)_{m \in \IN}$ let $(b_m) = (0, \ldots, 0, a_1, 0, \ldots, 0, a_2, \ldots)$ be a sequence built from $(a_m)_{m \in \IN}$ by inserting zeros. Denote by $\phi\colon \IN \to \IN$ the mapping which sends $k$ to the position of $a_k$ in the new sequence $(b_m)_{m \in \IN}$. Then the following hold (for a proof of the first implication see~\cite[Lemma~6.7]{Fac14}, the second can be proved analogously).
		
		\begin{lemma}\label{lem:technical_lemma} Let $p \in [1,\infty)$, $(a_m)_{m \in \IN}$ be a sequence, $(b_m)_{m \in \IN}$ and $\phi\colon \IN \to \IN$ be as above and suppose that 
			\[ M \coloneqq \sup_{k \in \IN} \phi(k+1) - \phi(k) < \infty. \]
			If $(a_m)_{m \in \IN} \in X_p$, then $(b_m)_{m \in \IN} \in X_p$ as well. Conversely, if $(b_m)_{m \in \IN} \in X_p$, then $(a_m)_{m \in \IN} \in X_p$.
		\end{lemma}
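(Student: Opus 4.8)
The plan is to exploit the block structure of $X_p = (\oplus_{n=1}^\infty \ell_2^n)_{\ell_p}$ directly, comparing the $X_p$-norm of $(a_m)$ with that of the zero-padded sequence $(b_m)$ block by block. Recall that a sequence lies in $X_p$ precisely when, after grouping its coordinates according to the successive Euclidean blocks $\ell_2^n$, the sequence of $\ell_2$-block-norms lies in $\ell_p$. The bounded-gap hypothesis $M = \sup_k \phi(k+1)-\phi(k) < \infty$ is what keeps the padding from smearing a single entry $a_k$ across arbitrarily many blocks, so that each block of $(b_m)$ receives entries from only boundedly many entries of $(a_m)$, and conversely each entry $a_k$ lands in a single, well-controlled position.

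First I would set up notation for the two block decompositions. Since the first implication is already established in \cite[Lemma~6.7]{Fac14}, I would treat that direction as known and concentrate on the converse: assuming $(b_m)_{m\in\IN}\in X_p$, show $(a_m)_{m\in\IN}\in X_p$. The key point is that zero-padding can only \emph{increase} or preserve block contributions and redistribute the nonzero entries; removing the inserted zeros to recover $(a_m)$ should not blow up the $\ell_p$-sum of block $\ell_2$-norms. Concretely, each nonzero entry $a_k$ sits at position $\phi(k)$ in $(b_m)$ and thus in some block $B(\phi(k))$ of the padded sequence; I would argue that the map sending $a_k$ to its block in the \emph{original} decomposition and the map sending it to its block in the \emph{padded} decomposition differ only by a bounded reshuffling governed by $M$. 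The finiteness of $M$ guarantees that each original block of $(a_m)$ is covered by at most a bounded number $N(M)$ of padded blocks, so that summing the $\ell_2$-norms over those padded blocks dominates (up to a constant depending only on $M$) the $\ell_2$-norm over the original block.

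The main step is therefore the estimate
\[
	\biggnorm{(a_m)}_{X_p}^p = \sum_{j} \biggl(\sum_{m \in I_j} \abs{a_m}^2\biggr)^{p/2} \le C_M \sum_{i} \biggl(\sum_{m \in J_i} \abs{b_m}^2\biggr)^{p/2} = C_M \norm{(b_m)}_{X_p}^p,
\]
where $I_j$ ranges over the original Euclidean blocks and $J_i$ over the padded ones, and $C_M$ absorbs the bounded overlap. The inequality follows because, by the bounded-gap condition, each original block $I_j$ splits across at most $N(M)$ consecutive padded blocks $J_i$; applying the elementary inequality $(\sum_{l=1}^{N} c_l)^{p/2} \le N^{p/2-1}\sum_{l=1}^N c_l^{p/2}$ (for $p \ge 2$; for the reverse range one uses the concavity/convexity in the appropriate direction) to the partial $\ell_2$-sums yields the claim.

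The part I expect to be the genuine obstacle is the bookkeeping aligning the two block decompositions: one must verify carefully that the inserted zeros, together with the map $\phi$ of bounded gaps, really do send each original block into a bounded window of padded blocks and vice versa, so that the combinatorial constant $N(M)$ is finite and depends only on $M$. Everything else is the routine $\ell_p$--$\ell_2$ inequality above. Since the reference indicates the converse "can be proved analogously" to \cite[Lemma~6.7]{Fac14}, I would ultimately mirror that argument, merely reversing the roles of $(a_m)$ and $(b_m)$ and checking that the bounded-gap hypothesis supplies the overlap bound in the required direction.
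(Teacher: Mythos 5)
Your strategy --- blockwise comparison of the two sequences plus the elementary inequality $\bigl(\sum_{l=1}^{N} c_l\bigr)^{p/2} \le N^{\max(p/2-1,\,0)} \sum_{l=1}^{N} c_l^{p/2}$ --- is the right one, and it is indeed the argument behind the reference: the paper itself contains no proof of this lemma at all, citing \cite[Lemma~6.7]{Fac14} for the first implication and asserting the converse is analogous. The problem is that your proposal stops exactly where the lemma begins. The step you set aside as ``bookkeeping'' is its entire mathematical content, and there are really \emph{two} overlap bounds to establish: (A) for every $j$ the image $\phi(B_j)$ of the $j$-th block meets at most $C(M)$ blocks $B_i$, and (B) for every $i$ the preimage $\phi^{-1}(B_i)$ meets at most $C(M)$ blocks $B_j$. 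Both are needed in \emph{each} direction: one gives the blockwise domination, the other bounds the multiplicity with which a fixed block gets counted when you sum your blockwise estimates over all blocks. Your displayed inequality silently uses both; without the multiplicity bound, a single block norm on the right-hand side may be counted unboundedly often and the estimate collapses.

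Moreover, neither (A) nor (B) is a formal consequence of $M < \infty$; they use the specific geometry of $X_p$, namely that $B_i$ has $i$ elements and starts at position $\frac{(i-1)i}{2}+1$, so the block containing position $n$ has size comparable to $\sqrt{2n}$. Here is how the gap is closed. Since $(b_m)$ arises by inserting zeros, $\phi$ is strictly increasing with $k \le \phi(k) \le \phi(1) + M(k-1)$. For (A): $\phi(B_j)$ lies in an interval of length at most $M(j-1)$ whose left endpoint is at least the first position of $B_j$, so every block met by this interval has index, hence size, at least $j$; an interval of length $M(j-1)$ meets at most $M+2$ such blocks. For (B): $\phi^{-1}(B_i)$ consists of at most $i$ consecutive integers $k$, each satisfying $k \ge \bigl(\frac{(i-1)i}{2}+1-\phi(1)\bigr)/M$, which for all but finitely many $i$ gives $k \ge \frac{i^2}{8M}$; hence every block met has size at least about $i/(2\sqrt{M})$, and $i$ consecutive integers can meet at most about $2\sqrt{M}+2$ such blocks (the finitely many exceptional $i$ are harmless for a membership statement). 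Note that the monotone growth of the block sizes together with $\phi(k) \ge k$ is essential: for an arbitrary blocking, bounded gaps do \emph{not} give bounded overlap, since a block of $j$ entries could be smeared over $Mj$ blocks of size one. So the lemma is true and your outline can be completed, but only by running this computation; it is not generic bookkeeping, and as written your proof is incomplete at precisely this point.
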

		
		Recall that in the proof of Theorem~\ref{thm:positive_counterexamples} the fundamental property of $(\gamma_m)_{m \in \IN}$ used (as clarified in Lemma~\ref{lem:maximize_resolvent_sequence}) was that the ratios
			\begin{equation}\label{eq:ratio}
				\frac{\gamma_m - \gamma_{m-1}}{\gamma_m + \gamma_{m-1}}
			\end{equation}
		are bounded from below. We now study more precisely the Schauder multipliers associated to various sequences $(\gamma_m)_{m \in \IN}$ for the basis $(f_m)_{m \in \IN}$.
		
		As a starting point we make the very elementary observation that one can find sequences $(\gamma_m)_{m \in \IN}$ for which the ratio \eqref{eq:ratio} has a prescribed growth.
		
		\begin{lemma}\label{lem:existence_c_m}
			Let $(c_m)_{m \ge 2}$ be a sequence of real numbers with $c_m \in (0,\frac{1}{2})$ for all $m \in \IN$. Then there exists a unique strictly increasing sequence $(\gamma_m)_{m \in \IN}$ of real numbers with $\gamma_1 = 1$ and
				\begin{equation}\label{eq:define_c_m}
					\frac{1}{2} \frac{\gamma_m - \gamma_{m-1}}{\gamma_m + \gamma_{m-1}} = c_m \qquad \text{for all } m \ge 2.
				\end{equation}
		\end{lemma}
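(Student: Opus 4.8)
The plan is to recognize that the defining relation \eqref{eq:define_c_m} is, for each fixed $m$, a linear equation in $\gamma_m$ once $\gamma_{m-1}$ is known, so the whole statement reduces to solving a one-step recursion. First I would clear the denominator in \eqref{eq:define_c_m}, obtaining $2c_m(\gamma_m + \gamma_{m-1}) = \gamma_m - \gamma_{m-1}$, and collect the $\gamma_m$-terms to get $(1 - 2c_m)\gamma_m = (1 + 2c_m)\gamma_{m-1}$, that is
\[
    \gamma_m = \frac{1 + 2c_m}{1 - 2c_m}\,\gamma_{m-1}.
\]
Because $c_m \in (0, \tfrac{1}{2})$, the factor $\frac{1+2c_m}{1-2c_m}$ is well defined, positive, and strictly greater than $1$. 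This single identity already encodes all three claims.

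For existence and uniqueness I would argue as follows: the displayed recursion together with $\gamma_1 = 1$ clearly determines a unique sequence $(\gamma_m)_{m \in \IN}$, given explicitly by $\gamma_m = \prod_{k=2}^{m} \frac{1 + 2c_k}{1 - 2c_k}$ for $m \ge 2$. Retracing the algebra shows that each step is reversible — no division by zero occurs since $1 - 2c_m > 0$, and the $\gamma_m$ produced are positive, so $\gamma_m + \gamma_{m-1} \neq 0$ — hence this sequence satisfies \eqref{eq:define_c_m}, and it is the only one that does. Strict monotonicity is then immediate from the recursion: since $\frac{1+2c_m}{1-2c_m} > 1$ and $\gamma_{m-1} > 0$, we have $\gamma_m > \gamma_{m-1}$ for all $m \ge 2$.

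There is essentially no genuine obstacle here; the content is a single algebraic rearrangement of \eqref{eq:define_c_m}. The only points requiring care are bookkeeping ones: one must check that the hypothesis $c_m < \tfrac{1}{2}$ is exactly what keeps the denominator $1 - 2c_m$ positive (guaranteeing both solvability and positivity of the $\gamma_m$), while $c_m > 0$ is exactly what forces the multiplicative factor above $1$ and hence the strict increase. In this sense the interval $(0, \tfrac{1}{2})$ is not an arbitrary normalization but precisely the range of $c_m$ for which a strictly increasing positive solution with $\gamma_1 = 1$ exists.
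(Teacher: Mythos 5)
Your proof is correct and is precisely the elementary argument the paper intends: the paper states this lemma without proof (introducing it as a ``very elementary observation''), and the intended content is exactly your rearrangement of \eqref{eq:define_c_m} into the recursion $\gamma_m = \frac{1+2c_m}{1-2c_m}\,\gamma_{m-1}$ with $\gamma_1 = 1$. Your additional checks --- reversibility of the algebra, positivity of the $\gamma_m$, and the observation that $c_m \in (0,\tfrac{1}{2})$ is exactly the condition guaranteeing a positive factor strictly greater than $1$ --- cover all the points the paper leaves implicit.
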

				
		We now formulate a necessary condition for the sequence $(c_m)_{m \in \IN}$ that implies that the Schauder multiplier associated to the sequence $(\gamma_m)_{m \in \IN}$ given by \eqref{eq:define_c_m} with respect to the basis $(f_m)_{m \in \IN}$ is $\mathcal{R}$-sectorial.
		
		\begin{proposition}\label{prop:mr_necessary_condition}
			Let $(c_m)_{m \ge 2}$ be a sequence with $c_m \in (0, \frac{1}{2})$ for all $m \ge 2$ and $(\gamma_m)_{m \in \IN}$ the sequence given by Lemma~\ref{lem:existence_c_m}. Suppose that for some $p > 2$ the sectorial operator $A$ on $X_p$ given as the Schauder multiplier
				\begin{align*}
					D(A) = \biggl\{ x = \sum_{m=1}^{\infty} a_m f_m: \sum_{m=1}^{\infty} \gamma_m a_m f_m \quad \mathrm{ exists} \biggr\} \\
					A \biggl( \sum_{m=1}^{\infty} a_m f_m \biggr) = \sum_{m=1}^{\infty} \gamma_m a_m f_m
			\end{align*}
			is $\mathcal{R}$-sectorial. Then $(a_m c_{4m+2})_{m \in \IN} \in X_p$ for all $(a_m)_{m \in \IN} \in \ell_p$.
		\end{proposition}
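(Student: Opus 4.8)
The plan is to run the contradiction argument from the proof of Theorem~\ref{thm:positive_counterexamples} in reverse: instead of deriving a contradiction, I would test the associated operator on the \emph{special} basis vectors $e_{b_m}$ and read off a quantitative bound from the coefficients it produces. Fix an arbitrary $(a_m) \in \ell_p$ and consider
\[
	x = \sum_{m} a_m\, e_{b_m}\, r_m \in \Rad(X_p).
\]
Because $\pi(4m+2) = b_m$ and each $b_m$ is the first even number of a distinct block $B_k$, the vectors $e_{b_m}$ are disjointly supported across the Euclidean summands $\ell_2^k$; by $1$-unconditionality of the standard basis $(e_m)$ the signs are immaterial and the inner $\ell_2$-norm of each block reduces to $\abs{a_m}$, whence $\norm{x}_{\Rad(X_p)} = \norm{(a_m)}_{\ell_p}$. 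In particular $x \in \Rad(X_p)$.

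Since $A$ is assumed $\mathcal{R}$-sectorial, Proposition~\ref{prop:resolvent_r_associated_operator} provides, for every $(q_m) \subset \IR_{-}$, a bound $\norm{\mathcal{R}} \le C$ on $\Rad(X_p)$ with $C$ independent of $(q_m)$. Using $e_{b_m} = f_{4m+2} - f_{4m+1}$ and that $R(q_m,A)$ acts diagonally on $(f_m)$, the same computation as in Theorem~\ref{thm:positive_counterexamples} gives
\[
	\mathcal{R}(x) = \sum_m r_m \frac{q_m a_m}{q_m-\gamma_{4m+2}}\, e_{b_m} + \sum_m r_m\, q_m a_m \Bigl( \tfrac{1}{q_m-\gamma_{4m+2}} - \tfrac{1}{q_m-\gamma_{4m+1}} \Bigr) e_{4m+1}.
\]
Guided by Lemma~\ref{lem:maximize_resolvent_sequence}, applied with $\gamma_{4m+2} > \gamma_{4m+1} > 0$, I would choose $q_m = -t_m$ where $t_m$ maximizes $d(t) = t[(t+\gamma_{4m+1})^{-1} - (t+\gamma_{4m+2})^{-1}]$; then the coefficient of $a_m e_{4m+1}$ has modulus $d(t_m) > \tfrac{1}{2}\tfrac{\gamma_{4m+2}-\gamma_{4m+1}}{\gamma_{4m+2}+\gamma_{4m+1}} = c_{4m+2}$ by \eqref{eq:define_c_m}.

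The final step isolates the $e_{4m+1}$-part and compresses it. Applying the contractive coordinate projection of $X_p$ onto $\overline{\linspan}\{e_{4m+1} : m \in \IN\}$, tensored with the identity on $\Rad$, removes all $e_{b_m}$-terms (the $b_m$ are even, the $4m+1$ odd) and, again dropping the Rademacher signs by $1$-unconditionality, yields
\[
	\norm{\textstyle\sum_m d(t_m)\, a_m\, e_{4m+1}}_{X_p} = \norm{\textstyle\sum_m r_m\, d(t_m)\, a_m\, e_{4m+1}}_{\Rad(X_p)} \le \norm{\mathcal{R}(x)} \le C \norm{(a_m)}_{\ell_p}.
\]
Since $0 < c_{4m+2} < d(t_m)$, unconditional domination gives $\norm{\sum_m c_{4m+2} a_m e_{4m+1}}_{X_p} \le C \norm{(a_m)}_{\ell_p}$, so the sequence carrying $c_{4m+2}a_m$ in position $4m+1$ lies in $X_p$. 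As these positions have constant gap $4$, Lemma~\ref{lem:technical_lemma} (the converse implication, with $M = 4$) compresses them to deliver $(a_m c_{4m+2})_m \in X_p$.

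The genuinely substantive input is the resolvent-point choice from Lemma~\ref{lem:maximize_resolvent_sequence}, which is exactly what converts boundedness of the associated operator into the growth bound encoded by $c_{4m+2}$. The main bookkeeping obstacle will be organizing the interplay of the three norm identities: engineering the test element so that its $\Rad(X_p)$-norm is \emph{exactly} $\norm{(a_m)}_{\ell_p}$, verifying that the coordinate projection cleanly separates the (even-indexed) $e_{b_m}$- from the (odd-indexed) $e_{4m+1}$-contributions, and checking the bounded-gap hypothesis required to invoke Lemma~\ref{lem:technical_lemma}.
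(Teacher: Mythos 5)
Your proposal is correct and takes essentially the same route as the paper's own proof: the same test element $x = \sum_m r_m a_m e_{\pi(4m+2)}$, the same transference to the associated operator via Proposition~\ref{prop:resolvent_r_associated_operator}, a resolvent-point choice guided by Lemma~\ref{lem:maximize_resolvent_sequence}, and the final compression through the converse part of Lemma~\ref{lem:technical_lemma}. The only minor deviations are that the paper takes $q_m = -\gamma_{4m+2}$ so that the $e_{4m+1}$-coefficient is exactly $-c_{4m+2}$ (making your unconditional-domination step unnecessary), and where you extract the $e_{4m+1}$-component by a contractive coordinate projection plus $1$-unconditionality, the paper instead invokes \cite[Theorem~12.3]{DJT95} together with unconditionality.
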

		\begin{proof}
			Observe that the basic sequence $(e_{\pi(4m+2)})_{m \in \IN}$ is isometrically equivalent to the standard unit vector basis of $\ell_p$. Let $(a_m)_{m \in \IN} \in \ell_p$. Then the Rademacher series $x = \sum_{m=1}^{\infty} r_m a_m e_{\pi(4m+2)}$ lies in $\Rad(X_p)$. One can now argue as in the proof of Theorem~\ref{thm:positive_counterexamples}:
			
			Let $(q_m)_{m \in \IN} \subset \IR_{-}$ be a sequence to be chosen later. Since $A$ is $\mathcal{R}$-sectorial by assumption, it follows from Proposition~\ref{prop:resolvent_r_associated_operator} that the operator $\mathcal{R}\colon \Rad(X) \to \Rad(X)$ associated to the sequence $(q_m)_{m \in \IN}$ is bounded. We now apply $\mathcal{R}$ to $x$. Because of $e_{\pi(4m+2)} = f_{4m+2} - f_{4m+1}$ we obtain
			\begin{align*}
				\mathcal{R}(x) & = \mathcal{R} \biggl( \sum_{m=1}^{\infty} r_m a_m (f_{4m+2} - f_{4m+1}) \biggr) \\
				& = \sum_{m=1}^{\infty} r_m \frac{a_m q_m}{q_m - \gamma_{4m+2}} f_{4m+2} - r_m \frac{a_m q_m}{q_m - \gamma_{4m+1}} f_{4m+1} \\
				& = \sum_{m=1}^{\infty} r_m \frac{a_m q_m}{q_m - \gamma_{4m+2}} (e_{\pi(4m+2)} + e_{4m+1}) - r_m \frac{a_m q_m}{q_m - \gamma_{4m+1}} e_{4m+1} \\
				& = \sum_{m=1}^{\infty} r_m \frac{a_m q_m}{q_m - \gamma_{4m+2}} e_{\pi(4m+2)} \\
				& \quad + r_m a_m q_m \biggl( \frac{1}{q_m - \gamma_{4m+2}} - \frac{1}{q_m - \gamma_{4m+1}} \biggr) e_{4m+1}.
			\end{align*}
			By Lemma~\ref{lem:maximize_resolvent_sequence} one has for $t = \gamma_{4m+2}$
				\[ 
					t [ (t + \gamma_{4m+2})^{-1} - (t + \gamma_{4m+1})^{-1} ] = -\frac{1}{2} \frac{\gamma_{4m+2} - \gamma_{4m+1}}{\gamma_{4m+2} + \gamma_{4m+1}} = -c_{4m+2}.
				\]
				Hence, for the choice $q_m = -\gamma_{4m+2}$ we obtain
				\[
					\mathcal{R}(x) = \sum_{m=1}^{\infty} \frac{1}{2} r_m a_m e_{\pi(4m+2)} - c_{4m+2} r_m a_m e_{4m+1}.
				\]
			As in the proof of Theorem~\ref{thm:positive_counterexamples} one deduces that $\sum_{m=1}^{\infty} c_{4m+2} a_m e_{4m+1}$ converges in $X_p$. By Lemma~\ref{lem:technical_lemma} this implies that $(a_m c_{4m+2})_{m \in \IN} \in X_p$. 
		\end{proof}
		
		In the next step we prove a sufficient criterion for maximal regularity. In fact, we will establish the boundedness of the imaginary powers. Let $A$ denote the Schauder multiplier associated to some sequence $(\gamma_m)_{m \in \IN}$ as above. It then follows from formula \eqref{eq:application_semigroup} that the imaginary powers $A^{it}$ for $t \in \IR$ act formally as
		\begin{equation}\label{eq:ait}
			\sum_{m=1}^{\infty} a_m e_m \mapsto \sum_{m=1}^{\infty} \tilde{\gamma}_m^{it} a_m e_m + \sum_{m=1}^{\infty} a_{2m} (\gamma_{\pi^{-1}(2m)}^{it} - \gamma_{\pi^{-1}(2m) - 1}^{it}) e_{\pi^{-1}(2m) - 1},
		\end{equation}
	where
		\[
			\tilde{\gamma}_m = \begin{cases}
				\gamma_m, & m \text{ odd} \\
				\gamma_{\pi^{-1}(m)}, & m \text{ even}
			\end{cases}.
		\]
	It is clear that the first series of the right hand side of \eqref{eq:ait} converges for all $(a_m)_{m \in \IN} \in X_p$. The crucial point is therefore the question whether the second series, which by the unconditionality of the basis $(e_m)_{m \in \IN}$ can be rewritten as
		\[
			\sum_{m=1}^{\infty} a_{2m} (\gamma_{\pi^{-1}(2m)}^{it} - \gamma_{\pi^{-1}(2m) - 1}^{it}) e_{\pi^{-1}(2m) - 1} = \sum_{m=1}^{\infty} a_{\pi(2m)} (\gamma_{2m}^{it} - \gamma_{2m-1}^{it})  e_{2m-1},
		\]
	converges in $X_p$ for all $(a_m)_{m \in \IN} \in X_p$. Equivalently by Lemma~\ref{lem:technical_lemma}, the sequence $(a_{\pi(2m)} (\gamma_{2m}^{it} - \gamma_{2m-1}^{it}))_{m \in \IN}$ must lie in $X_p$ for all $(a_m)_{m \in \IN} \in X_p$. We now give a sufficient condition. Here we use the fact that if a sectorial operator $A$ on some UMD-space has bounded imaginary powers of polynomial growth, then $A$ is $\mathcal{R}$-sectorial with $\omega_{R}(A) = 0$ \cite[Theorem~4.5]{DHP03}.	
		
	\begin{proposition}\label{prop:mr_sufficient_condition}
		Let $(c_m)_{m \in \IN}$ be a sequence with $c_m \in (0, \frac{1}{8})$ for all $m \ge 2$ and let $(\gamma_m)_{m \in \IN}$ be the sequence given by Lemma~\ref{lem:existence_c_m}. Consider for $p > 2$ the sectorial operator $A$ on $X_p$ defined as
		\begin{align*}
				D(A) = \biggl\{ x = \sum_{m=1}^{\infty} a_m f_m: \sum_{m=1}^{\infty} \gamma_m a_m f_m \quad \mathrm{ exists} \biggr\} \\
				A \biggl( \sum_{m=1}^{\infty} a_m f_m \biggr) = \sum_{m=1}^{\infty} \gamma_m a_m f_m.
		\end{align*}
		If $(b_m c_{2m})_{m \in \IN}$ lies in $X_p$ for all $(b_m)_{m \in \IN} \in \ell_p$, then $A$ has bounded imaginary powers with linear growth. In particular, $A$ is $\mathcal{R}$-sectorial with $\omega_R(A) = 0$.
	\end{proposition}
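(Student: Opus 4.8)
The plan is to verify the hypothesis of the cited criterion \cite[Theorem~4.5]{DHP03} by proving directly that the operators $A^{it}$ given by the explicit formula \eqref{eq:ait} are bounded on $X_p$ with $\norm{A^{it}} \le C(1 + \abs{t})$. I would split $A^{it} = U_t + S_t$ according to the two sums in \eqref{eq:ait}. The first part $U_t$ is the multiplication operator sending $\sum_m a_m e_m$ to $\sum_m \tilde{\gamma}_m^{it} a_m e_m$; since $\abs{\tilde{\gamma}_m^{it}} = 1$ and multiplication by a bounded scalar sequence on $X_p$ has operator norm equal to the supremum of the moduli, $U_t$ is an isometry for every $t \in \IR$. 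Everything therefore reduces to the off-diagonal part, which after the reindexing carried out just before the proposition reads
\[ S_t\Bigl(\sum_{m} a_m e_m\Bigr) = \sum_{m=1}^{\infty} a_{\pi(2m)}\bigl(\gamma_{2m}^{it} - \gamma_{2m-1}^{it}\bigr) e_{2m-1}. \]

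The first key step is a pointwise estimate on the multiplier. Using $\abs{\gamma_{2m-1}^{it}} = 1$ and $\abs{e^{is} - 1} \le \abs{s}$ one gets
\[ \abs{\gamma_{2m}^{it} - \gamma_{2m-1}^{it}} = \Bigl\lvert e^{it \log(\gamma_{2m}/\gamma_{2m-1})} - 1 \Bigr\rvert \le \abs{t}\, \log\frac{\gamma_{2m}}{\gamma_{2m-1}}. \]
By \eqref{eq:define_c_m} one has $\gamma_{2m}/\gamma_{2m-1} = (1 + 2c_{2m})/(1 - 2c_{2m})$, and since $c_{2m} \in (0,\tfrac18)$ is bounded away from $\tfrac12$, the elementary inequality $\log\frac{1+x}{1-x} \le \frac{2x}{1-x}$ gives $\log(\gamma_{2m}/\gamma_{2m-1}) \le C_0\, c_{2m}$ with an absolute constant $C_0$. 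Hence $\abs{\gamma_{2m}^{it} - \gamma_{2m-1}^{it}} \le C_0 \abs{t}\, c_{2m}$ for all $m$.

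The second step converts the hypothesis into a linear-in-$t$ operator bound by factoring $S_t$ through $\ell_p$. Because $p > 2$ one has $\norm{\cdot}_{\ell_p} \le \norm{\cdot}_{X_p}$ on coordinates, so $P\colon X_p \to \ell_p$, $(a_m) \mapsto (a_{\pi(2m)})_m$, is bounded with $\norm{P} \le 1$. Next, the hypothesis together with the first implication of Lemma~\ref{lem:technical_lemma} (applied with the constant gap $M = 2$) shows that $R_c\colon \ell_p \to X_p$, $(b_m) \mapsto \sum_m b_m c_{2m}\, e_{2m-1}$, is well defined, hence bounded with some norm $K$ by the closed graph theorem. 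Writing $\gamma_{2m}^{it} - \gamma_{2m-1}^{it} = \lambda_m^{(t)} c_{2m}$ with $\lambda_m^{(t)} \coloneqq (\gamma_{2m}^{it} - \gamma_{2m-1}^{it})/c_{2m}$ and $\abs{\lambda_m^{(t)}} \le C_0 \abs{t}$ by the first step, we obtain $S_t = D_t R_c P$, where $D_t$ is multiplication by the scalar sequence $(\lambda_m^{(t)})$ placed at the positions $2m-1$, a multiplication operator on $X_p$ of norm at most $C_0 \abs{t}$. Therefore $\norm{S_t} \le C_0 K \abs{t}$ uniformly in $t$, and $\norm{A^{it}} \le 1 + C_0 K \abs{t}$.

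The remaining verifications are routine: strong continuity of $t \mapsto A^{it}$ follows from the explicit action in \eqref{eq:ait} by dominated convergence on the multiplier and density of the finite sums, so that $(A^{it})_{t \in \IR}$ is a genuine $C_0$-group of imaginary powers of linear growth. As $X_p$ is UMD, \cite[Theorem~4.5]{DHP03} then yields that $A$ is $\mathcal{R}$-sectorial with $\omega_R(A) = 0$. I expect the main obstacle to be the second step: one must factor $S_t$ so that the hypothesis, which is a boundedness statement for an operator $\ell_p \to X_p$, can actually be invoked, while keeping the dependence on $t$ exactly linear. The delicate points are the passage between the $\ell_p$- and $X_p$-structures (valid only because $p > 2$) and the zero-insertion supplied by Lemma~\ref{lem:technical_lemma}, both of which must be arranged so that all constants remain independent of $t$.
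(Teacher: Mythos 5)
Your proof is correct and takes essentially the same route as the paper: the same pointwise multiplier estimate $\abs{\gamma_{2m}^{it}-\gamma_{2m-1}^{it}}\le C\abs{t}\,c_{2m}$ (the paper derives it with constant $8$ via $1-\cos x\le x^2/2$ and $\abs{\log(1-4x)}\le 8x$, you via $\abs{e^{is}-1}\le\abs{s}$), then the embedding $X_p\hookrightarrow\ell_p$ for $p>2$, the closed graph theorem applied to the hypothesis, Lemma~\ref{lem:technical_lemma} for the zero insertion, and finally \cite[Theorem~4.5]{DHP03} on the UMD space $X_p$. Your factorization $S_t=D_tR_cP$ is merely a cosmetic repackaging of the paper's coordinatewise estimates, so no substantive difference remains.
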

	\begin{proof}
		A short calculation shows that one has for all $m \in \IN$
			\begin{align*}
				\normalabs{\gamma_{2m}^{it} - \gamma_{2m-1}^{it}}^{2} & = \abs{\exp(it \log \gamma_{2m}) - \exp(it \log \gamma_{2m-1})}^2 \\ 
				& = \normalabs{\exp(it \log \gamma_{2m})}^2 + \normalabs{\exp(it \log \gamma_{2m-1})}^2 \\
				& \qquad - 2 \Re \exp(it (\log(\gamma_{2m-1} - \log \gamma_{2m}))) \\
				& = 2(1 - \cos(t (\log \gamma_{2m-1} - \log \gamma_{2m}))).
			\end{align*}
		Here we have used the identity
			\[
				\abs{z-w}^2 = (z-w)(\overline{z} - \overline{w}) = \abs{z}^2 + \abs{w}^2 - (z \overline{w} + \overline{z} w) = \abs{z}^2 + \abs{w}^2 - 2 \Re z \overline{w}.
			\]
		Further, one has
			\begin{align*}
				\abs{\log \gamma_{2m-1} - \log \gamma_{2m}} & = \abs{\log \left( \frac{\gamma_{2m-1}}{\gamma_{2m}} \right)} = \abs{\log \left( 1 - \frac{\gamma_{2m} - \gamma_{2m-1}}{\gamma_{2m}} \right)} \\
				& \le \abs{ \log \left( 1- 2 \frac{\gamma_{2m} - \gamma_{2m-1}}{\gamma_{2m} + \gamma_{2m-1}} \right)} = \abs{\log ( 1 - 4c_{2m})}.
			\end{align*}
		It follows from elementary calculus that $1 - \cos x \le \frac{x^2}{2}$ for all $x \in \IR$. In particular, we obtain the estimate
			\begin{align*}
				2(1 - \cos(t (\log \gamma_{2m-1} - \log \gamma_{2m})) \le t^{2} \log^2(1-4c_{2m}).
			\end{align*}
		A further elementary estimate from calculus is that $\abs{\log(1-4x)} \le 8x$ holds for all $x \in [0,\frac{1}{8}]$. Therefore we see that for all $m \in \IN$ one has
			\begin{equation}\label{eq:img_powers_fundamental_inequality}
				\normalabs{\gamma_{2m}^{it} - \gamma_{2m-1}^{it}} \le 8 \abs{t} c_{2m}. 
			\end{equation}
		Now, let $(a_m)_{m \in \IN} \in X_p$. Since $p > 2$, we have the inclusion $X_p \hookrightarrow \ell_p$. Hence, $(a_{\pi(2m)})_{m \in \IN} \in \ell_p$. By assumption, the mapping $(b_m)_{m \in \IN} \mapsto (b_m c_{2m})_{m \in \IN}$ from $\ell_p$ into $X_p$ is well-defined and closed. Hence, by the closed graph theorem there exists a constant $C \ge 0$ such that $\norm{(c_{2m} b_m)}_{X_p} \le C \norm{(b_m)}_{\ell_p}$ for all $(b_m)_{m \in \IN}$ in $\ell_p$. Hence, we obtain that $(a_{\pi(2m)} c_{2m})_{m \in \IN} \in X_p$ with 
		\[
			\normalnorm{(a_{\pi(2m)} c_{2m})}_{X_p} \le C \normalnorm{(a_{\pi(2m)})}_{\ell_p} \le C \norm{(a_m)}_{X_p}.
		\]
		It is now a direct consequence of equation \eqref{eq:img_powers_fundamental_inequality} that $((\gamma_{2m}^{it} - \gamma_{2m-1}^{it}) a_{\pi(2m)}) \in X_p$ with
		\[
			\normalnorm{((\gamma_{2m}^{it} - \gamma_{2m-1}^{it}) a_{\pi(2m)})}_{X_p} \le 8 C \abs{t} \norm{(a_m)}_{X_p}. 
		\]
		Altogether this shows that $A$ has bounded imaginary powers with $\normalnorm{A^{it}} \le K (1 + \abs{t})$ for some constant $K > 0$.
	\end{proof}
	
	\begin{remark}
		The same conditions as in Proposition~\ref{prop:mr_sufficient_condition} even imply the boundedness of the $H^{\infty}$-calculus for $A$ (for the necessary background see~\cite{KunWei04} and \cite{DHP03}). For this let first $g \in H^{\infty}(H_{\omega})$, where $H_{\omega} \coloneqq \{ z \in \IC : \abs{\Im z} < \omega \}$ is the strip of height $\omega > 0$. Then for $z \in \IR$ it follows from Cauchy's integral formula that for all $\tilde{\omega} \in (0, \omega)$ and all $k \ge 1$
			\begin{align*}
				\frac{\abs{g^{k}(z)}}{k!} & = \abs{\frac{1}{2\pi i} \left( \int_{-\infty + i\tilde{\omega}}^{\infty + i\tilde{\omega}} - \int_{-\infty - i\tilde{\omega}}^{\infty - i\tilde{\omega}} \right) \frac{g(w)}{(w-z)^{k+1}} \d w} \\
				 & \le \frac{\norm{g}_{H_{\omega}}}{\pi} \int_{-\infty}^{\infty} \frac{1}{\abs{s + i\tilde{\omega} - z}^{k+1}} \d s = \frac{\norm{g}_{H_{\omega}}}{\pi} \int_{-\infty}^{\infty} \frac{1}{(s^2 + \tilde{\omega}^2)^{(k+1)/2}} \d s \\
				 & = \frac{\norm{g}_{H_{\omega}}}{\pi} \tilde{\omega}^{-k} \int_{-\infty}^{\infty} \frac{1}{(1+s^2)^{(k+1)/2}} \d s \le \tilde{\omega}^{-k} \norm{g}_{H_{\omega}},
			\end{align*}
		where $H_{\omega}$ is endowed with the supremum norm. Hence, we obtain for $z_0, z \in \IR$ with $\abs{z-z_0} < \tilde{\omega}$ that
			\begin{align*}
				\abs{g(z) - g(z_0)} \le \sum_{k=1}^{\infty} \omega^{-k} \norm{g}_{H_{\omega}} \abs{z-z_0}^k \le C_{\tilde{\omega}} \norm{g}_{H_{\omega}} \abs{z-z_0}
			\end{align*}
		for a universal constant $C_{\tilde{\omega}} > 0$. Using the notation from Proposition~\ref{prop:mr_sufficient_condition} we obtain for $f \in H^{\infty}(\Sigma_{\theta})$ and $\theta > 0$
			\begin{align*}
				\MoveEqLeft \abs{f(\gamma_{2m}) - f(\gamma_{2m-1})} = \abs{(f \circ \exp)(\log \gamma_{2m}) - (f \circ \exp)(\log \gamma_{2m-1})} \\
				& \le C_{\tilde{\theta}} \norm{f \circ \exp}_{H_{\theta}} \abs{\log \gamma_{2m} - \log \gamma_{2m-1}} \le 8 C_{\tilde{\theta}} \norm{f}_{\Sigma_{\theta}} c_{2m}
			\end{align*}
		using the estimates from the proof of Proposition~\ref{prop:mr_sufficient_condition} provided $8c_{2m} < \tilde{\theta}$ for some $\tilde{\theta} \in (0, \theta)$. By the above assumptions $(c_m)_{m \in \IN}$ is a zero sequence and therefore this condition is satisfied for sufficiently large $m$ and we can deduce the boundedness of the $H^{\infty}$-calculus for $A$ with $\omega_{H^{\infty}}(A) = 0$ as in the above proof.
	\end{remark}
	
	For a special type of sequences $(c_m)_{m \in \IN}$ one can use the above results to obtain a complete characterization of maximal regularity.
			
	\begin{corollary}\label{cor:characterization_mr_via_sequences}
		Let $(c_m)_{m \in \IN}$ be an eventually decreasing sequence with $c_m \in (0, \frac{1}{8})$ for all $m \ge 2$ and $(\gamma_m)_{m \in \IN}$ the sequence given by Lemma~\ref{lem:existence_c_m}. Consider for $p > 2$ the sectorial operator $A$ on $X_p$ defined by
			\begin{align*}
				D(A) = \biggl\{ x = \sum_{m=1}^{\infty} a_m f_m: \sum_{m=1}^{\infty} \gamma_m a_m f_m \quad \mathrm{ exists} \biggr\} \\
				A \biggl( \sum_{m=1}^{\infty} a_m f_m \biggr) = \sum_{m=1}^{\infty} \gamma_m a_m f_m.
		\end{align*}
		Then $A$ is $\mathcal{R}$-sectorial if and only if $(c_m)_{m \in \IN} \in (\oplus_{n=1}^{\infty} \ell_q^n)_{\ell_{\infty}}$, where $\frac{1}{2} = \frac{1}{p} + \frac{1}{q}$. Moreover, in this case one has $\omega_R(A) = 0$. 	
	\end{corollary}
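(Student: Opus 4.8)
The plan is to show that the three sequence conditions implicitly at play are all equivalent, so that the necessary condition of Proposition~\ref{prop:mr_necessary_condition} and the sufficient condition of Proposition~\ref{prop:mr_sufficient_condition} pin down one and the same thing. Writing $q$ for the exponent with $\frac12 = \frac1p + \frac1q$, I would first record the multiplier characterization underlying both directions: for a nonnegative sequence $(d_m)_{m \in \IN}$, the diagonal map $(b_m) \mapsto (b_m d_m)$ sends $\ell_p$ into $X_p$ if and only if $(d_m) \in (\oplus_n \ell_q^n)_{\ell_\infty}$, i.e.\ $\sup_n \norm{(d_m)_{m \in B_n}}_{\ell_q} < \infty$. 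Indeed, by the closed graph theorem being everywhere defined is equivalent to boundedness; and since both $\ell_p = (\oplus_n \ell_p^n)_{\ell_p}$ and $X_p = (\oplus_n \ell_2^n)_{\ell_p}$ are $\ell_p$-direct sums along the blocks $B_n$, the multiplier is block-diagonal, so its norm equals $\sup_n \norm{\diag((d_m)_{m \in B_n})}_{\ell_p^n \to \ell_2^n}$. For $p \ge 2$ the norm of such a diagonal operator is exactly $\norm{(d_m)_{m \in B_n}}_{\ell_q}$, which follows from Hölder's inequality applied to $\sum_m \normalabs{d_m}^2 \normalabs{b_m}^2$ with the conjugate exponents $(p/2)' = q/2$, equality being attained on an extremal vector.

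With this in hand, Proposition~\ref{prop:mr_necessary_condition} says that $\mathcal{R}$-sectoriality forces $(c_{4m+2})_m \in (\oplus_n \ell_q^n)_{\ell_\infty}$, while Proposition~\ref{prop:mr_sufficient_condition} says that $(c_{2m})_m \in (\oplus_n \ell_q^n)_{\ell_\infty}$ already implies $\mathcal{R}$-sectoriality with $\omega_R(A) = 0$. It therefore remains to bridge the gap between these two subsampled conditions and the target condition $(c_m)_m \in (\oplus_n \ell_q^n)_{\ell_\infty}$. The idea is that, because $(c_m)$ is eventually decreasing, each of the three block conditions is equivalent to the single pointwise decay estimate $c_m = O(m^{-1/(2q)})$, the finitely many blocks meeting the initial non-monotone range contributing only finitely to the relevant suprema.

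To establish this I would exploit that $\min B_n \asymp \max B_n \asymp n^2$ with $\normalabs{B_n} = n$. For the target condition, monotonicity gives $n\, c_{\max B_n}^q \le \sum_{m \in B_n} c_m^q \le n\, c_{\min B_n}^q$; the upper estimate together with $c_m = O(m^{-1/(2q)})$ yields $\sum_{m \in B_n} c_m^q \le C\, n (\min B_n)^{-1/2} \le C'$, and conversely the lower estimate gives $c_{\max B_n} \le C\, n^{-1/q}$, which by monotonicity (filling in the indices between the endpoints of successive blocks) upgrades to $c_M \le C\, M^{-1/(2q)}$ for all large $M$. The same squeeze applies verbatim to $(c_{2m})$ and $(c_{4m+2})$: for $m \in B_n$ the indices $2m$ and $4m+2$ run through intervals of length $O(n)$ around $n^2$ and $2n^2$, so $\sum_{m \in B_n} c_{2m}^q$ and $\sum_{m \in B_n} c_{4m+2}^q$ are likewise trapped between $n$ times the $q$-th powers of $c$ at the two endpoints, and the identical decay rate $c_M = O(M^{-1/(2q)})$ emerges. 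Hence all three conditions coincide.

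Chaining everything then gives $\mathcal{R}$-sectorial $\Rightarrow (c_{4m+2})_m \in (\oplus_n\ell_q^n)_{\ell_\infty} \Leftrightarrow (c_m)_m \in (\oplus_n\ell_q^n)_{\ell_\infty} \Leftrightarrow (c_{2m})_m \in (\oplus_n\ell_q^n)_{\ell_\infty} \Rightarrow \mathcal{R}$-sectorial with $\omega_R(A) = 0$, which closes the equivalence and delivers the angle statement for free. I expect the main obstacle to be precisely this combinatorial bridging step: the necessary and sufficient propositions produce \emph{different} subsamples, $c_{4m+2}$ sitting at depth $\asymp 2n^2$ versus $c_{2m}$ at depth $\asymp n^2$ inside the block structure, and it is exactly the eventual monotonicity of $(c_m)$ — routed through the intermediate decay rate $c_m = O(m^{-1/(2q)})$ — that allows one to pass between them. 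By comparison, the multiplier-norm computation is a routine Hölder duality argument.
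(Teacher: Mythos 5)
Your proposal is correct, and it reaches the statement by a partly different route than the paper, so a comparison is worthwhile. Both arguments rest on the same Hölder-duality fact: a diagonal multiplier $(b_m) \mapsto (b_m d_m)$ maps $\ell_p$ boundedly into $X_p$ exactly when $\sup_n \normalnorm{(d_m)_{m \in B_n}}_{\ell_q} < \infty$; the paper proves this only for the full symbol $(c_m)$ (Hölder in one direction, testing on vectors supported in a single block in the other), whereas you state it for an arbitrary symbol and apply it to the subsampled symbols $(c_{2m})_m$ and $(c_{4m+2})_m$ as well — your computation of the norm of $\diag(d)\colon \ell_p^n \to \ell_2^n$ as $\normalnorm{d}_{\ell_q}$, including the extremal vector, is exactly right since $2(p/2)' = q$. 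The genuine divergence is in how the mismatched subsamples of Propositions~\ref{prop:mr_necessary_condition} and~\ref{prop:mr_sufficient_condition} are reconciled with the full sequence. The paper stays at the level of the vector-valued conditions: it shows each subsampled condition is equivalent to ``$(a_m c_m)_{m \in \IN} \in X_p$ for all $(a_m)_{m \in \IN} \in \ell_p$'' by splitting into even and odd (resp.\ mod-$4$) parts, recombining via the zero-insertion Lemma~\ref{lem:technical_lemma}, and dominating the odd part through $c_{2m+1} \le c_{2m}$ and the lattice structure of $X_p$; no information about the size or location of the blocks $B_n$ enters. You instead descend to the scalar level and show that all three block-$\ell_q$ conditions are, for eventually decreasing sequences, equivalent to the single decay rate $c_m = O(m^{-1/(2q)})$, using $\abs{B_n} = n$, $\min B_n \asymp \max B_n \asymp n^2$, and the fact that consecutive sampled positions have gaps $O(n)$ so that monotonicity fills in between them; I verified these squeeze estimates and they close correctly (e.g.\ $n\, c_{\max B_n}^q \le M^q$ gives $c_{\max B_n} \le M n^{-1/q}$, which upgrades to the stated rate since the ratio of consecutive sampled indices tends to $1$). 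What each approach buys: the paper's bridging is structural, independent of the particular block geometry, and short once Lemma~\ref{lem:technical_lemma} is available; yours is more computational and tied to the quadratic block structure, but it produces the explicit characterization ``$A$ is $\mathcal{R}$-sectorial iff $c_m = O(m^{-1/(2q)})$'' as a byproduct, which makes the examples $c_m = k^{-\alpha}$ and $c_m = k^{-\alpha}\log k$ feeding into Theorem~\ref{thm:extrapolation_mr_complete_counterexample} entirely transparent.
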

	\begin{proof}
		Clearly, it suffices to show the corollary for decreasing sequences. As a first observation we show that both the conditions in Proposition~\ref{prop:mr_necessary_condition} and Proposition~\ref{prop:mr_sufficient_condition} are equivalent to: $(a_m c_m)_{m \in \IN} \in X_p$ for all $(a_m)_{m \in \IN} \in \ell_p$. We show only the non-trivial implication for the condition in Proposition~\ref{prop:mr_sufficient_condition}. Of course, for the condition in Proposition~\ref{prop:mr_necessary_condition} the proof is completely analogous. So assume that $(a_m c_{2m})_{m \in \IN} \in X_p$ for all $(a_m)_{m \in \IN} \in \ell_p$. Now let $(a_m)_{m \in \IN} \in \ell_p$. In order to show that $(a_m c_{m})_{m \in \IN} \in X_p$, by Lemma~\ref{lem:technical_lemma}, it suffices to show that $(a_{2m} c_{2m})_{m \in \IN}$ and $(a_{2m+1} c_{2m+1})_{m \in \IN}$ lie in $X_p$. For the first sequence this follows directly from the assumption and for the second this follows from the monotonicity of $(c_m)_{m \in \IN}$.		
		
		Hence, we have shown that $A$ is $\mathcal{R}$-sectorial if and only if $(a_m c_m)_{m \in \IN} \in X_p$ for all $(a_m)_{m \in \IN} \in \ell_p$. In this case, by the closed graph theorem, there exists a constant $M \ge 0$ such that
			\begin{equation}\label{eq:characterizing_inclusion}
				\norm{(a_m c_m)}_{X_p} \le M \norm{(a_m)}_{\ell_p}
			\end{equation}
		for all $(a_m)_{m \in \IN} \in \ell_p$. Now, it remains to show that this condition is equivalent to $(c_m)_{m \in \IN} \in (\oplus_{n=1}^{\infty} \ell_q^n)_{\ell_{\infty}}$. On the one hand it follows from Hölder's inequality that for $(c_m)_{m \in \IN} \in (\oplus_{n=1}^{\infty} \ell_q^n)_{\ell_{\infty}}$ one has
			\begin{align*}
				\norm{(a_m c_m)}_{X_p} \le \sup_{m \in \IN} \bigg( \sum_{k \in B_m} \abs{a_k}^q \bigg)^{1/q} \bigg( \sum_{m=1}^{\infty} \abs{a_m}^p \bigg)^{1/p},
			\end{align*} 
		which is \eqref{eq:characterizing_inclusion}. On the other hand it follows from \eqref{eq:characterizing_inclusion} that for all $n \in \IN$
			\[
				\sup_{\norm{(a_m)}_{\ell_p} \le 1} \bigg( \sum_{k \in B_n} \abs{a_k c_k}^2 \bigg)^{1/2} \le M.
			\]
		This implies that for all $n \in \IN$ one has
			\[
				\bigg( \sum_{k \in B_n} \abs{c_k}^q \bigg)^{1/q} \le M.
			\]
		In other words one has $(c_m)_{m \in \IN} \in (\oplus_{n=1}^{\infty} \ell_q^n)_{\ell_{\infty}}$. This finishes the proof.
	\end{proof}
	
	We now give two fundamental examples for $(c_m)_{m \in \IN}$. First, for $c_m = k^{-\alpha}$ for $m \in B_k$ and $\alpha \in (0, \frac{1}{2})$ one has $(c_m)_{m \in \IN} \in (\oplus_{n=1}^{\infty} \ell_q^n)_{\ell_{\infty}}$ if and only if $p \le \frac{2}{1-2\alpha}$. Second, $c_m = k^{-\alpha} \log k$ for $m \in B_k$ lies in $(\oplus_{n=1}^{\infty} \ell_q^n)_{\ell_{\infty}}$ if and only if $p < \frac{2}{1-2\alpha}$. These two families of sequences can now be used to obtain a complete answer to the maximal regularity extrapolation problem.
	
	\begin{theorem}\label{thm:extrapolation_mr_complete_counterexample}\index{maximal regularity!extrapolation problem!counterexample}
		Let $I \subset (1, \infty)$ be an arbitrary interval with $2 \in I$. Then there exists a family of consistent analytic $C_0$-semigroups $(T_p(z))_{z \in \Sigma_{\frac{\pi}{2}}}$ on $L_p(\IR)$ for $p \in (1, \infty)$ such that $(T_p(z))_{z \in \Sigma_{\frac{\pi}{2}}}$ has maximal regularity (resp.\ bounded imaginary powers / a bounded $H^{\infty}$-calculus) if and only if $p \in I$.
	\end{theorem}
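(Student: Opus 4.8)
The plan is to realise an arbitrary interval as the intersection of two \emph{one-sided} maximal regularity sets. Write $p_-=\inf I$ and $p_+=\sup I$, so that $1\le p_-\le 2\le p_+\le\infty$. The first building block is a Schauder multiplier $U$ of the type in Corollary~\ref{cor:characterization_mr_via_sequences} whose maximal regularity set is the half-open interval $(1,p_+]$ (or $(1,p_+)$), the second a multiplier $L$ with maximal regularity set $[p_-,\infty)$ (or $(p_-,\infty)$). Placing $U$ and $L$ on two independent complemented copies of $X_p$ inside $L_p(\IR)$ and letting the generator act as the identity on the remaining complement yields a consistent family whose maximal regularity set is $(1,p_+]\cap[p_-,\infty)=[p_-,p_+]=I$, with the open/closed behaviour at the two endpoints dialed in separately.

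For the upper block I take $U$ to be the multiplier attached to $c_m=k^{-\alpha}$ for $m\in B_k$ with $\tfrac{2}{1-2\alpha}=p_+$ (and $c_m=k^{-\alpha}\log k$ for the open endpoint). By Corollary~\ref{cor:characterization_mr_via_sequences}, $U$ is $\mathcal R$-sectorial on $X_p$ for $p>2$ exactly when $p\le p_+$. What makes the set one-sided is that $U$ retains maximal regularity for \emph{all} $p\le 2$: the only obstruction produced by the resolvent transference of Proposition~\ref{prop:mr_necessary_condition} is the inclusion $(a_m c_{4m+2})\in X_p$ for $(a_m)\in\ell_p$, which for $p\le 2$ is automatic since $\|\cdot\|_{\ell_2}\le\|\cdot\|_{\ell_p}$ on each block forces $\|\cdot\|_{X_p}\le\|\cdot\|_{\ell_p}$; and, by duality, the boundedness of the imaginary powers~\eqref{eq:ait} for $p\le 2$ reduces to that of the adjoint (flipped) multiplier at the conjugate exponent $p'\ge 2$, where the coupling feeding a given block $B_l$ originates from positions of size $\approx l^2$ and hence from only boundedly many native blocks $B_\nu$ with $\nu\approx\sqrt2\,l$; this concentration makes the perturbation dominated by $(\sup_m c_m)<\tfrac18$ times the $X_{p'}$-norm. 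The lower block $L$ is the mirror image: either the multiplier for the flipped perturbation $f^*_{2k-1}=e^*_{2k-1}-e^*_{\pi(2k)}$, $f^*_{2k}=e^*_{\pi(2k)}$, which is precisely the dual basis of~\eqref{eq:basis_perturbation}, or simply the adjoint of an upper-block operator with endpoint $p_-'$. Since $\mathcal R$-sectoriality passes to adjoints on the reflexive UMD space $X_p$, the maximal regularity set of $L$ is $[p_-,\infty)$.

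The single technical novelty beyond Section~4 is this half-line claim, namely that each block keeps maximal regularity throughout its non-binding half-line; I expect the concentration estimate just sketched—controlling how the permutation $\pi$ distributes the filler indices among the blocks—to be the main obstacle, since the crude $\ell_p$-bound used for $p>2$ is far too lossy and one must instead exploit that the $X_p$-norm localises the relevant contributions. Granting it, the combination is routine. The space $X_p=(\oplus_n\ell_2^n)_{\ell_p}$ embeds complementably and consistently into $L_p(\IR)$ by realising the $n$-th block as the span of $n$ independent standard Gaussians and taking the blocks independent, the associated first-chaos projection being bounded on $L_p$ for every $p\in(1,\infty)$ and independent of $p$; transporting $U$ and $L$ and adding the identity on the complement gives consistent bounded analytic semigroups $(T_p(z))_{z\in\Sigma_{\frac{\pi}{2}}}$ of sectorial angle $0$ on $L_p(\IR)$. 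A direct sum is $\mathcal R$-sectorial with $\omega_R<\tfrac{\pi}{2}$ iff each summand is, and the identity part always is, so by Theorem~\ref{thm:weis} the family has maximal regularity iff $p\in I$; the cases $p_+=\infty$ and $p_-=1$ are obtained by omitting the corresponding block. Finally, on each regularity half-line the sufficient condition yields a bounded $H^{\infty}$-calculus with $\omega_{H^{\infty}}=0$ (Remark after Proposition~\ref{prop:mr_sufficient_condition}), hence bounded imaginary powers, whereas on the complementary half-line even $\mathcal R$-sectoriality fails; thus the stated equivalence holds simultaneously for maximal regularity, bounded imaginary powers, and the $H^{\infty}$-calculus.
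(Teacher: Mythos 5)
Your overall architecture coincides with the paper's: an ``upper'' Schauder multiplier whose maximal regularity set is $(1,p_+]$ resp.\ $(1,p_+)$ (obtained from Corollary~\ref{cor:characterization_mr_via_sequences} with $c_m=k^{-\alpha}$ resp.\ $k^{-\alpha}\log k$ and $\alpha=\frac{p_+-2}{2p_+}$), a ``lower'' block obtained by taking adjoints at the conjugate exponent, a direct sum so that the maximal regularity sets intersect to $I$, and a consistent complemented embedding of $X_p$ into $L_p(\IR)$; your Gaussian first-chaos embedding is a legitimate variant of the paper's Khintchine/Rademacher-projection embedding, and the simultaneous treatment of maximal regularity, bounded imaginary powers and the $H^\infty$-calculus is handled the same way in both arguments.

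There is, however, a genuine gap exactly where you flag it yourself: the claim that the upper block retains maximal regularity on all of $(1,2]$. Nothing in your text proves this. Your first reduction is a non sequitur: checking that the conclusion of Proposition~\ref{prop:mr_necessary_condition} holds automatically for $p\le 2$ only shows that this particular \emph{necessary} condition is not violated; it yields no $\mathcal{R}$-sectoriality whatsoever. Your second argument --- dualize to the flipped multiplier on $X_{p'}$, $p'\ge 2$, and bound the perturbation term in \eqref{eq:ait} by a ``concentration'' estimate on how the permutation $\pi$ distributes the filler indices over the blocks $B_k$ --- is only sketched, and you explicitly defer it (``Granting it, the combination is routine''). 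Since the entire theorem rests on the maximal regularity sets of the two blocks being \emph{exact} half-lines, this is the heart of the proof, not a routine supplement; without it you cannot exclude that the upper block already loses maximal regularity somewhere in $(1,2)$, which would destroy the intersection argument. The paper closes precisely this gap with a tool you had available but did not invoke: Proposition~\ref{prop:constructed_basis_unconditional} states that $(f_m)_{m\in\IN}$ is unconditional on $X_p$ for $p\in(1,2]$, and together with \cite[Theorem~2.5]{Fac14} this immediately gives $\mathcal{R}$-sectoriality (hence maximal regularity, bounded imaginary powers and a bounded $H^\infty$-calculus) of the multiplier for every $p\in(1,2]$, with no duality and no combinatorial analysis of $\pi$. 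Your concentration estimate is plausibly provable --- the targets coming from a source block $B_l$ do land in boundedly many blocks --- but as written it is a conjecture, and your proof is incomplete without it.
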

	\begin{proof}
		Let $I$ be such an interval and let $p_0$ be the right end of $I$. We first construct a family $(T_p(z))_{z \in \Sigma_{\frac{\pi}{2}}}$ that has maximal regularity if and only if $p \in (1,2) \cup I$. For $p_0 = 2$ this has already been done in \cite[Corollary~6.8]{Fac14}. So we may assume $p_0 > 2$. Choose $c_m = k^{-\alpha}$ for $m \in B_k$ and $\alpha = \frac{p_0 - 2}{2p_0}$ if $p_0 \in I$ or $c_m = k^{-\alpha} \log k$ for $m \in B_k$ and $\alpha = \frac{p_0 - 2}{2p_0}$ if $p_0 \not\in I$ multiplied by appropriate scaling constants such that $c_m \in (0, \frac{1}{8})$ for all $m \ge 2$. Then it follows from Corollary~\ref{cor:characterization_mr_via_sequences} and the above calculations that the analytic semigroups on $X_p$ for $p \in (1, \infty)$ whose generators are the Schauder multipliers associated the sequence $(-\gamma_m)_{m \in \IN}$ given by Lemma~\ref{lem:existence_c_m} with respect to the basis $(f_m)_{m \in \IN}$ have maximal regularity for $p \in (2, \infty)$ if and only if $p \in I \cap (2, \infty)$. Moreover, it follows from Proposition~\ref{prop:constructed_basis_unconditional} and \cite[Theorem~2.5]{Fac14} that these semigroups have maximal regularity for $p \in (1,2]$. In order to obtain consistent semigroups $(T_p(z))_{z \in \Sigma_{\frac{\pi}{2}}}$ on $L_p$ which have maximal regularity if and only if $p \in (1,2) \cup I$, one needs to transfer the just constructed example consistently in $p \in (1, \infty)$ from the $X_p$- to the $L_p$-scale. In fact, this can be done as in the proof of \cite[Theorem~6.3]{Fac14}:
		
		From the Khintchine inequality one obtains consistent isomorphisms $X_p = (\oplus_{n=1}^{\infty} \ell_2^n)_{\ell_p} \xrightarrow{\sim} (\oplus_{n=1}^{\infty} \Rad_n)_{\ell_p}$, where $\Rad_n$ is the span of the first $n$ Rademacher functions in $L_p([0,1])$. Hence, $(\oplus_{n=1}^{\infty} \Rad_n)_{\ell_p}$ can be identified with a closed subspace of $L_p([0,\infty))$. Together with the projection given by the direct sum of the consistent Rademacher projections $L_p([0,1]) \to \Rad_n$ we are able to transport the counterexample consistently to $L_p([0,\infty))$.  		 
		
		Taking dual semigroups, it follows from the first part of the proof that there exist consistent analytic $C_0$-semigroups $(S_p(z))_{z \in \Sigma_{\frac{\pi}{2}}}$ on $L_p([0,\infty))$ for $p \in (1, \infty)$ such that $(S_p(z))_{z \in \Sigma_{\frac{\pi}{2}}}$ has maximal regularity if and only if $p \in (2,\infty) \cup I$. Taking the direct sum of $(T_p(z))_{z \in \Sigma_{\frac{\pi}{2}}}$ and $(S_p(z))_{z \in \Sigma_{\frac{\pi}{2}}}$ one obtains the desired family of semigroups. 
	\end{proof}
	
	\begin{remark}
		It is an open problem whether every generator of a bounded analytic $C_0$-semigroup on a uniformly convex UMD-space that is contractive on $\IR_{\ge 0}$ has maximal regularity or even a bounded $H^{\infty}$-calculus. 
		
		We now comment on what we know about the contractivity of the semigroups considered above. First, it is easy to see that on $X_2 = \ell_2$ the semigroup given by a sequence $(c_m)_{m \in \IN}$ is contractive for all allowed $(c_m)_{m \in \IN}$.
		Further, on $X_{\infty} \coloneqq (\oplus_{n=1}^{\infty} \ell_2^n)_{c_0}$ one can again use the standard basis $(e_m)_{m \in \IN}$ to define the Schauder basis $(f_m)_{m \in \IN}$ and the sectorial operators as for example formulated in Corollary~\ref{cor:characterization_mr_via_sequences}. Using the same notation as before, the operator $B = -A$ given by a sequence $(c_m)_{m \in \IN}$ is the generator of a bounded analytic $C_0$-semigroup on $X_{\infty}$. Thus, by the Lumer--Phillips theorem, $B$ generates a contractive semigroup on $X_{\infty}$ if and only if $B$ is dissipative, i.e.\ if for all $x \in D(B)$ there exists an $x^* \in J(x) \coloneqq \{ x^* \in X_{\infty}^*: \langle x, x^* \rangle = \norm{x}^2 = \norm{x^*}^2 \}$ such that $\Re \langle Bx, x^* \rangle \le 0$. We now study this condition.
		
		Observe that one has $Be_m = -\gamma_m e_m$ for odd $m$ and $Be_{\pi(m)} = -(\gamma_m - \gamma_{m-1})e_{m-1} - \gamma_m e_{\pi(m)}$ for even $m$. Then one has for $x \in D(B)$ and $x^* \in X_{\infty}^*$
			\begin{align*}
				\langle Bx, x^* \rangle & = -\sum_{m \text{ odd}} \gamma_m x_m x^*_m - \sum_{m \text{ even}} \gamma_{\pi^{-1}(m)} x_{m} x^*_m \\
				& - \sum_{m \text{ odd}} (\gamma_{m+1} - \gamma_m) x_{\pi(m+1)} x^*_{m}.
		 	\end{align*}
		Now, choose $k \in \IN$ and $x_m = \frac{\gamma_{m+1} - \gamma_m}{2 \gamma_m}$ for $m \in B_k$ and $m \equiv 1 \mod 4$ and $x_{\pi(m+1)} = -1$ for $m \in B_k$ and $m+1 \equiv 2 \mod 4$ and $x_m = 0$ otherwise. Notice that if $k$ is sufficiently large and $\sum_{m \in B_k} \abs{x_m}^2 > 1$, then $x^* = \sum_{m \in B_k} x_m e_m$ is the unique element in $J(x)$. One therefore obtains
			\begin{align*}
				\langle Bx, x^* \rangle & = -\sum_{\substack{m \in B_k: \\m \equiv 1 \mod 4}} \gamma_m \abs{x_m}^2 + \sum_{\substack{m \in B_k: \\m \equiv 1 \mod 4}} (\gamma_{m+1} - \gamma_m) \abs{x_m} \\
				& = \sum_{\substack{m \in B_k: \\m \equiv 1 \mod 4}} \frac{1}{4} \frac{(\gamma_{m+1} - \gamma_m)^2}{\gamma_m} > 0.
			\end{align*}
		If $(c_m)_{m \in \IN} \not\in (\oplus_{n=1}^{\infty} \ell_2^n)_{\ell_{\infty}}$, one has $\sum_{m \in B_k} \abs{x_m}^2 > 1$ for sufficiently large $k$ because of the monotonicity of $(c_m)_{m \in \IN}$ and the estimate $c_m \le \frac{\gamma_{m+1} - \gamma_m}{2 \gamma_m} \le K c_m$ for some $K > 0$ and all $m \in \IN$.	Hence, it follows from the above calculation that $(c_m)_{m \in \IN} \not\in (\oplus_{n=1}^{\infty} \ell_2^n)_{\ell_{\infty}}$ implies the non-contractivity of the semigroup. Notice that $(c_m)_{m \in \IN} \not\in (\oplus_{n=1}^{\infty} \ell_2^n)_{\ell_{\infty}}$ is exactly the limit case for $p \to \infty$ of the condition given in Corollary~\ref{cor:characterization_mr_via_sequences}.
		
		We do not know whether analogously for $p \in (2, \infty)$ the condition $(c_m)_{m \in \IN} \not\in (\oplus_{n=1}^{\infty} \ell_q^n)_{\ell_{\infty}}$ with $\frac{1}{2} = \frac{1}{p} + \frac{1}{q}$ implies that the generated semigroup is not contractive.
	\end{remark}
	
	\printbibliography  

\end{document}